\documentclass{amsart}
\usepackage{amsmath,amsthm, amssymb,mathtools,mdwlist, MnSymbol,mathrsfs}

\newtheorem{thm}{Theorem}[section]

   \makeatletter\let\c@cor\c@thm\makeatother

\newtheorem{prop}{Proposition}[section]
   \makeatletter\let\c@prop\c@thm\makeatother
   
\newtheorem*{prop*}{Proposition}

\newtheorem{lem}{Lemma}[section]
   \makeatletter\let\c@lem\c@thm\makeatother

\theoremstyle{definition}

\newtheorem{defn}{Definition}[section]
   \makeatletter\let\c@defn\c@thm\makeatother

\newtheorem{eg}{Example}[section]
   \makeatletter\let\c@eg\c@thm\makeatother

\theoremstyle{remark}

   \makeatletter\let\c@rmk\c@thm\makeatother

   
\usepackage[all]{xy}
\usepackage{hyperref}
\usepackage{tikz}
\usepackage{tikz-3dplot}
\usetikzlibrary{backgrounds}
\usepackage{subcaption}

\usepackage{mathrsfs}

\newcommand{\op}{\mathscr{O}}
\newcommand{\mop}{\mathbb{O}}
\newcommand{\opt}{\mathscr{P}}
\newcommand{\mopt}{\mathbb{P}}

\newcommand{\cM}{\mathcal{M}}
\newcommand{\cC}{\mathcal{C}}
\newcommand{\cW}{\mathcal{W}}
\newcommand{\cD}{\mathcal{D}}

\newcommand{\fa}[2]{{#1}({#2})}
\newcommand{\malg}{algebra}
\newcommand{\malgs}{algebras}
\newcommand{\oalg}{algebra}
\newcommand{\oalgs}{algebras}

\newcommand{\otop}[2]{{#1}\otimes_\op{#2}}
\newcommand{\unit}{{\epsilon}}

\newcommand{\sset}{s\mathbf{S}\text{et}}
\newcommand{\cat}{\mathbf{C}\text{at}}
\let\xto\xrightarrow
\everymath{\displaystyle}

\DeclareMathOperator{\id}{\mathrm{id}}

\begin{document}
	\title{Inverting operations in operads}
    
    \author[M. Basterra]{Maria Basterra}
\address{Department of Mathematics, University of New Hampshire,  Durham, NH 03824, USA}
\email{basterra@unh.edu}
\author[I. Bobkova]{Irina Bobkova}
\address{School of Mathematics, Institute for Advanced Study, Princeton, NJ 08540, USA}
\email{irinabobkova@gmail.com}
\author[K. Ponto]{Kate Ponto}
\address{Department of Mathematics, University of Kentucky, Lexington, KY 40506, USA}
\email{kate.ponto@uky.edu}
\author[U. Tillmann]{Ulrike Tillmann}
\address{Mathematical Institute, University of Oxford, Oxford OX2 6GG, UK}
\email{tillmann@maths.ox.ac.uk}
\author[S. Yeakel]{Sarah Yeakel}
\address{Department of Mathematics, University of Maryland, College Park, MD 20742, USA}
\email{syeakel@math.umd.edu}

\date{\today}

\begin{abstract}
We construct a localization for operads with respect to one-ary operations based on the Dwyer-Kan hammock localization \cite {DwyerKan}.  
For an operad $\op$ and a sub-monoid of one-ary operations $\cW$ we associate 
an operad $L\op $ and a canonical map $\op \to L\op $ which takes elements in  $\cW$ 
to  homotopy invertible operations. Furthermore, we give a functor  from the category of $\op$-algebras to the category of $L \op$-algebras satisfying an appropriate universal property.
\end{abstract}



\maketitle

\section{Introduction}

\vskip .1in
If $\op $ is an operad in simplicial sets  with $n$-ary operations  $\op(n)$ and  
$X$ is an algebra over $\op$, each $n$-ary operation gives a map 
 \[X^n \to X.\] 
Unless $X$ is quite trivial, we do not expect these maps to be invertible for $n\neq 1$. 
In contrast,  one might like to insist that (at least some of) the one-ary operations are invertible (up to homotopy). 
To facilitate the study of such algebras, we seek an operad that encodes such information. 
This leads us to search for a good definition of localization for an operad $\op$ with respect to a submonoid $\cW \subset \op(1)$ of one-ary operations. 

\vskip .1in
Localizations have been much studied in the literature, particularly in the context of model categories.  
An especially useful and well-studied construction of the localization of a category is the hammock localization of Dwyer and Kan \cite {DwyerKan}. 
We propose a variant of their construction where we consider not only hammocks of string type, 
that is a sequence of right and left pointing arrows, but also of tree type where left and right pointing arrows are assembled in a tree. 
This seems a necessary complication so that operad composition is well defined and associative after localization. 

Indeed, the complication arises because hammock localization does not preserve the monoidal structure of a category. 
It is well-known that operads (with an action of the symmetric group) correspond to 
strict (symmetric) monoidal categories with object set $\mathbb N $ where Hom-sets 
between two objects $a$ and $b$ are monoidally generated from Hom-sets with source 1.
Since the hammock localization of \cite {DwyerKan} does  not preserve the  monoidal structure, the outcome does not readily provide an operad.

\vskip .1in
The proposed tree hammock localization 
\[
  L^{TH}_\cW (\op) \quad (\text{or simply  }  L\op)
\] 
of $\op$ is functorial in the pair $(\op, \cW) $ and every operation in $\cW$ is invertible up to homotopy in $L\op$; 
see Lemmas \ref{op_loc_functor} and \ref{op_loc_we} and \autoref{w_action}.
Furthermore the derived tensor product defines a functor from $\op$-algebras to $L\op$-algebras satisfying 
a natural universal property with respect to $\op$-algebra maps to $L\op$-algebras; see \autoref{localization_alg}. 

\vskip .1in 
Our study has been motivated by considerations in topological and conformal quantum field theory.  
From Atiyah and Segal's axiomatic point of view, this is the study of symmetric monoidal functors from suitably defined cobordism categories.
A particularly well-studied theory is the field theory modeled by the 1+1 dimensional cobordism category 
where the objects are disjoint unions of circles and the morphisms are (oriented) surfaces with boundary.
Often one is led to the question of how the theory behaves stably, and more or less equivalently, when the operation defined by the torus is invertible. 
More generally there has been much recent interest in invertible topological field theories in the context of the study of anomalies and topological phases.  
See, for example, \cite{freedhopkins} and the references therein.

Much of topological field  theory is captured when restricting to the (maximal) symmetric monoidal sub-category of the cobordism category corresponding to an operad. 
Therefore, the study of the surface operad is essential to the study of 1+1 dimensional topological field theory and conformal field theory. 
It has many homotopy equivalent models; one, denoted by $\cM$, is built as a subcategory of Segal's category of Riemann surfaces \cite{Segal} 
and we will keep this operad in mind as an example.
This was also the motivating example for our study \cite {BBPTY} of operads with homological stability, compare \cite {T}.
Indeed our discussion there led us to consider localizations of the operad $\mathcal M$ 
in an attempt to answer an old question of Mike Hopkins\footnote{Stringy Topology in Morelia, Morelia, Mexico. 2006}. 

\subsection*{Content}
In Section \ref{sec:operads}, we review the definition of an operad and then, in Section \ref{sec:smc}, 
we  characterize them as a subcategory in the category of strict symmetric monoidal categories. 
Next, in  Section \ref{sec:hammock}, we recall the definition and some properties of the standard hammock localization for categories from \cite {DwyerKan}.  
Our main new construction is the tree hammock localization in Section \ref{sec:loc_operads} where we also study  several important properties. 
In Section \ref{sec:loc_alg} we provide a functor from algebras over a given operad to algebras over an associated localized operad.

\subsection*{Acknowledgments}
The present paper represents part of the authors' Women in Topology project, the main part of which \cite{BBPTY} will be published elsewhere. 
We  thank the organizers of the Women in Topology Workshop for providing a wonderful opportunity for collaborative research and  
the Banff International Research Station for supporting the workshop. We also thank the Association for Women in Mathematics for partial travel support.  

The fourth author would also like to thank Mikhail Kapranov for sharing his thoughts  on hammock localization and operads.

\section{Operads} \label{sec:operads}
Let $(\cD,\otimes , U)$ be a closed symmetric monoidal category 
that is tensored over sets and has (finite) colimits.  
Our primary interest is the case where $\cD$ is the category of simplicial sets $\sset$ or the category of based simplicial sets $\sset_*$.  

\begin{defn}
An {\bf operad} $\op$ in $\cD$ is a collection of objects 
  $ \{ \op(n)\}_{n\geq 0}$ in $\cD$, a map $\unit\colon U\to \op(1)$, 
  a right action of the symmetric group $\Sigma_n$ on $\op(n)$  for each $n\geq 0$, and structure maps
  \[ 
   \gamma \colon  \op(k) \otimes  \op({j_1}) \otimes \ldots\otimes \op({j_k}) \to  \op(j)
  \]
  for $k\geq 1$, $j_s \geq 0$ and $j= \Sigma_{s=1}^k j_s$ so the following diagrams commute for all  $i_t$, $j_s$ and $k$.
  \begin{itemize}
  \item $\gamma$ is associative.
  \begin{center}
\begin{tikzpicture}
 \node [right] at (0,0)(a) {$\begin{aligned}\op(k) &\otimes \op(j_1)\otimes \ldots \otimes \op(j_k)
 \\
 &\otimes 
 \op(i_1)\otimes \ldots \otimes \op(i_{j_1}) \\ &\otimes \cdots \\
 & \otimes \op (i_{j-j_k+1}) \ldots \otimes \op(i_j) \end{aligned}$};
  \node [left] at (12,0)(b) {$\begin{aligned}&\op(j_1+ \ldots+j_k) \\&\otimes \op(i_1)\otimes \ldots \otimes \op(i_{j_1}) \\
  & \otimes \ldots \\
  &\otimes \op(i_{j-j_{k}+1}) \otimes \ldots \otimes \op (i_{j})\end{aligned}$};
    \node [right] at (0.2,-4)(d) {$\begin{aligned}\op(k)&\otimes \op(i_1+\ldots+i_{j_1} )\\ &\otimes \cdots \\ &\otimes \op(i_{j-j_k+1}+ \ldots +i_j)\end{aligned}$};
    \node [left] at (11.1,-4)(c) {$ \op(i_1+\ldots +i_{j})$};
    
    \draw (a) edge [->](b);
    \draw (a) edge [->](d);
    \draw (b) edge [->](c);
    \draw (d) edge [->] (c);
\end{tikzpicture}
\end{center}
\item $\unit$ is a unit for $\gamma$.
  \[\xymatrix{
  U\otimes \op(j)\ar[d]_{\unit\otimes \id}\ar@/^/[dr]^\sim
  & 
  &
    \op(k)\otimes U\otimes \ldots \otimes U\ar[d]_{\id\otimes\unit\otimes \ldots \otimes \unit}\ar@/^/[dr]^\sim
  & 
  \\
  \op(1)\otimes \op(j)\ar[r]^-\gamma &\op(j)
  &\op(k)\otimes \op(1)\otimes \ldots \otimes \op(1)\ar[r]^-\gamma &\op(k)
  }\]
  \item $\gamma$ is equivariant with respect to the symmetric group actions.  
\[
  \xymatrix{\op(k) \otimes \op(j_1) \otimes \cdots \otimes \op(j_k) \ar[d]^{\id \otimes \sigma^\ast} \ar[r]^{\sigma \otimes \id \otimes \cdots \otimes \id} 
  & \op(k) \otimes \op(j_1) \otimes \cdots \otimes \op(j_k) \ar[dd]^{\gamma} 
  \\ 
  \op(k) \otimes \op(j_{\sigma^{-1}(1)}) \otimes \cdots \otimes \op(j_{\sigma^{-1}(k)}) \ar[d]^{\gamma} 
  \\ 
  \op(j_{\sigma^{-1}(1)}+ \cdots + j_{\sigma^{-1}(k)}) \ar[r]^{\sigma_\ast(j_1, \dots, j_k)} 
  & \op(j_1 + \cdots + j_k)
  }
  \]
  For $\sigma \in \Sigma_k$, $\sigma(j_1,\ldots,j_k)\in \Sigma_j$ permutes blocks of 
    size $j_s$ according to $\sigma$.
  \[
  \xymatrix@C=45pt{\op(k) \otimes \op(j_1) \otimes \cdots \otimes \op(j_k) \ar[r]^{\id \otimes \tau_1 \otimes \cdots \otimes \tau_k}  \ar[d]^{\gamma} 
  & \op(k) \otimes \op(j_1) \otimes \cdots \otimes \op(j_k) \ar[d]^{\gamma} 
  \\ 
  \op(j_{1}+ \cdots + j_{k}) \ar[r]^{\tau_1\oplus \dots\oplus \tau_k} 
  & \op(j_1 + \cdots + j_k)
  }
  \] 
  For $\tau_s \in \Sigma_{j_s}$ $\tau_1 \oplus \ldots \oplus \tau_k$ denotes the image of
    $(\tau_1,\ldots ,\tau_k)$ under the natural inclusion of 
    $\Sigma_{j_1}\times \ldots \times \Sigma_{j_k}$ into $\Sigma_j$
 \end{itemize}
\end{defn}

\begin{eg}\label{eg:monoid}
  Let $M$ be a simplicial monoid. Then $M$ defines an operad $M_+$ where 
  \[
   M_+(0) = \{ *\}, \,\,  M_+(1)=M, \text { and } M_+ (n) = \emptyset  \text { for } n\geq 2.
  \]
  Operad composition is defined by monoid multiplication.
  Similarly, for any operad $\op$ the monoid of one-ary operations gives rise to a suboperad $\op(1)_+$.
\end{eg}

\begin{eg}\label{eg:moduli}
  Let $\cM_{g,n}$ denote the moduli spaces of Riemann surfaces of genus $g$ with $n$ parametrized and ordered boundary components. 
  Segal \cite{Segal} constructed a symmetric monoidal category where the objects are finite unions of circles 
  and morphism spaces are  disjoint unions of the spaces $\cM_{g,n}$ with boundary circles divided into incoming and outgoing.   
  Composition of morphisms is defined by gluing outgoing circles of one Riemann surface to incoming circles of another.  
  A conformal field theory in the sense of \cite{Segal} is then a symmetric monoidal functor from this surface category to an appropriate linear category.
 
  By restricting the category and replacing the spaces by their total singular complexes, we define an operad  $\cM$ where 
  \[
   \cM(n)\coloneqq \coprod_{g\geq 0} \mathcal Sing_* (\mathcal M_{g, n+1}). 
  \] 
  In order to study stable phenomena, one wants to invert the action of the torus $T \in 
 \mathcal Sing_0(\mathcal M_{1, 1+1} ) \subset \mathcal M(1) $. See, for example, \cite{freedhopkins,TillmannComm}.  
\end{eg}

The structure and unit maps define a map $\circ_i\colon \op(k)\otimes \op(j)\to \op(k+j-1)$  by requiring the following diagram to commute.   
\[\xymatrix{
\op(k)\otimes U\otimes \ldots \otimes U\otimes \op(j)\otimes U \otimes \ldots \otimes U 
\ar[d]^{\id\otimes \unit \otimes \ldots \otimes \unit \otimes \id \otimes \unit \otimes \ldots \otimes \unit}\ar[r]^-\sim
&\op(k)\otimes \op(j)
\ar@{.>}[d]^{\circ_i}
\\
\op(k)\otimes\op(1)\otimes  \ldots \otimes \op(1)\otimes \op(j)\otimes \op(1) \otimes \ldots \otimes \op(1)\ar[r]^-\gamma
&\op(j+k-1)
}\] 
In the top left corner of the diagram $\op(j)$ is in the $(i+1)^{st}$ spot. 
If $\otimes$ is the (categorical) product in $\cD$, as it is for sets or unbased spaces,  
the maps $\circ_i$ determine the composition maps $\gamma$.

\section{Operads as symmetric monoidal categories}\label{sec:smc}
It is well-known that operads give rise to symmetric monoidal categories. 
 As the correspondence provides us with a useful  comparison (see Section \ref{sec:loc_operads}), 
we include a full description and proof of this correspondence.

For  an operad $\op$ in $\cD$,  
we define a  strict  symmetric monoidal category $\cC_\op$ enriched in $\cD$ whose objects are the natural numbers. 
Morphisms for $a>0$ are given by 
\[
 {\cC_\op}(a,b)\coloneqq\coprod_{\Sigma k_i = b} (\op(k_1) \otimes \cdots \otimes \op(k_a))\times_{\Sigma _{k_1} \times \dots \times \Sigma _{k_a}} {\Sigma_b}
\]
where the coproduct is indexed by sequences of natural numbers $(k_1, \dots , k_a)$ such that $\Sigma_i k_i = b$ and $k_i \geq 0$.
Tensoring with $\Sigma_b$ and using a coequalizer allows for all 
possible permutations of inputs, not only the ones that permute the inputs of each $\op(k_i)$ separately. 
In particular,  there is a monoid homomorphism $U\times \Sigma_n\to \cC_\op (n, n)$
and this defines a left $\Sigma_a$ and a right $\Sigma_b$ action on $\cC_\op(a,b)$.  See \autoref{fig:tree_action} for an example in a concrete category.
We define $\cC_\op(0,0)$ to be $U$ and note that $\cC_\op(0,b)$ is the initial object if $b>0$ and $\cC_\op(a,0)$ is $\op(0)^{\otimes a}$ for $a>0$.

 \begin{figure}
 \begin{subfigure}[t]{.45\linewidth}
 \centering
        \resizebox{\linewidth}{!}{
\begin{tikzpicture}
  \node at (2,1)(e1){$1$};
  \node at (2,3)(e3){$2$};
  \node at (2,5.5)(e5){$3$};

  \node at (3,1)(f1){$2$};
  \node at (3,3)(f3){$1$};
  \node at (3,5.5)(f5){$3$};

  \node at (4,1)(cc2){$1$};
  \node at (4,3)(cc3){$2$};
  \node at (4,5.5)(cc4){$3$};

  \coordinate (c1) at (5,0);
  \coordinate (c2) at (5,1);
  \coordinate (c3) at (5,3);
  \coordinate (c4) at (5,5.5);
  \coordinate (c5) at (5,7);

  \node at (6,1)(d2){$1$};
  \node at (6,2)(d3){$1$};
  \node at (6,3)(d4){$2$};
  \node at (6,4)(d5){$3$};

  \node at (6,5) (d6){$1$};
  \node at (6,6) (d7){$2$};
  
  \node at (7,1)(g2){$1$};
  \node at (7,2)(g3){$2$};
  \node at (7,3)(g4){$3$};
  \node at (7,4)(g5){$4$};

  \node at (7,5) (g6){$5$};
  \node at (7,6) (g7){$6$};
  
  \node at (8,1)(h2){$2$};
  \node at (8,2)(h3){$1$};
  \node at (8,3)(h4){$4$};
  \node at (8,4)(h5){$5$};

  \node at (8,5) (h6){$3$};
  \node at (8,6) (h7){$6$};
  \draw (cc2)--(c2) -- (d2);
  \draw (cc3)--(c3) -- (d3);
  \draw (c3) -- (d4);
  \draw (c3) -- (d5);
  \draw (cc4)--(c4)-- (d6);
  \draw (c4)-- (d7);
  
   \draw [dotted] (e5)--(f5);
   \draw [dotted] (e3)--(f3);
   \draw [dotted] (e1)--(f1);

  \draw [dotted] (d2)--(g2)--(h2);
  \draw [dotted] (d3)--(g3)--(h3);
  \draw [dotted] (d4)--(g4)--(h4);
  \draw [dotted] (d5)--(g5)--(h5);
  \draw [dotted] (d6)--(g6)--(h6);
  \draw [dotted] (d7)--(g7)--(h7);
\end{tikzpicture}}
\caption{$\sigma=(12)$, $\tau=(12)(345)$}
\end{subfigure}
\hspace{.5cm}
\begin{subfigure}[t]{.45\linewidth}
 \centering
        \resizebox{\linewidth}{!}{

\begin{tikzpicture}

  \node at (2,2)(e1){$1$};
  \node at (2,4)(e3){$2$};
  \node at (2,5.5)(e5){$3$};

  \node at (3,2)(f1){$2$};
  \node at (3,4)(f3){$1$};
  \node at (3,5.5)(f5){$3$};
  
  \node at (4,4)(cc2){$1$};
  \node at (4,2)(cc3){$2$};
  \node at (4,5.5)(cc4){$3$};
  
  \coordinate (c2) at (5,4);
  \coordinate (c3) at (5,2);
  \coordinate (c4) at (5,5.5);
  
  \node at (6,4)(d2){$1$};
  \node at (6,1)(d3){$1$};
  \node at (6,2)(d4){$2$};
  \node at (6,3)(d5){$3$};

  \node at (6,5) (d6){$1$};
  \node at (6,6) (d7){$2$};
  
  \node at (7,4)(g2){$4$};
  \node at (7,1)(g3){$1$};
  \node at (7,2)(g4){$2$};
  \node at (7,3)(g5){$3$};
  \node at (7,5) (g6){$5$};
  \node at (7,6) (g7){$6$};

  \node at (8,4)(h2){$2$};
  \node at (8,1)(h3){$1$};
  \node at (8,2)(h4){$4$};
  \node at (8,3)(h5){$5$};

  \node at (8,5) (h6){$3$};
  \node at (8,6) (h7){$6$};
  
  \draw (cc2)--(c2) -- (d2);
  \draw (cc3)--(c3) -- (d3);
  \draw (c3) -- (d4);
  \draw (c3) -- (d5);
  \draw (cc4)--(c4)-- (d6);
  \draw (c4)-- (d7);

   \draw [dotted] (e5)--(f5)--(cc4);
   \draw [dotted] (e3)--(f3)--(cc2);
   \draw [dotted] (e1)--(f1)--(cc3);

  \draw [dotted] (d2)--(g2)--(h2);
  \draw [dotted] (d3)--(g3)--(h3);
  \draw [dotted] (d4)--(g4)--(h4);
  \draw [dotted] (d5)--(g5)--(h5);
  \draw [dotted] (d6)--(g6)--(h6);
  \draw [dotted] (d7)--(g7)--(h7);
\end{tikzpicture}}
\caption{$\sigma(1,3,2)\tau=(24)(35)$}
\end{subfigure}
\caption{The left symmetric group action}\label{fig:tree_action}
\end{figure}
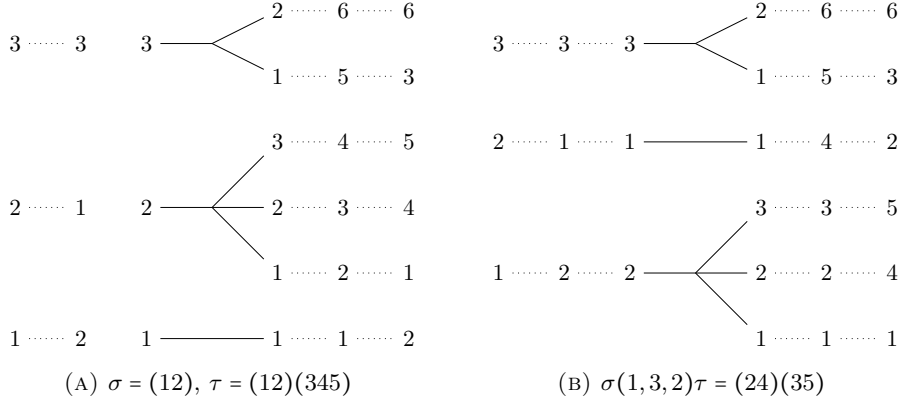

\vspace{3pt}
\noindent{\bf Composition.} The map $\cC_\op(a,b)\otimes\cC_\op(b,c) \to \cC_\op(a,c)$ is factored by the map 
\[
\xymatrix{
\left(\op(k_1) \otimes \cdots \otimes \op(k_a) \times_{\Sigma_{k_1} \times \cdots \times \Sigma_{k_a}} \Sigma_b \right) \otimes \left(\op(j_1) \otimes \cdots \otimes \op(j_b) \times_{\Sigma_{j_1} \times \cdots \times \Sigma_{j_b}} \Sigma_c \right)
\ar[d] \\
\left( \op(k_1) \otimes \cdots \otimes \op(k_a) \otimes \op(j_1) \otimes \cdots \otimes \op(j_b) \right) \times_{\Sigma_{k_1} \times \cdots \times \Sigma_{j_b}} \Sigma_b \times \Sigma_c}
\]
The composition takes $(\sigma,\tau) \in \Sigma_b \times \Sigma_c$ to $\sigma(k_1,\ldots ,k_a)\tau$ (the block permutation described above) and is defined by $(\gamma \otimes \cdots \otimes \gamma)\circ \sigma^\ast$ on the tensor product.
 \[ 
\xymatrix{
\op(k_1) \otimes \cdots \otimes \op(k_a)  \otimes \op(j_1) \otimes \cdots \otimes \op(j_b)  \ar[d]^{\sigma^\ast} \\ 
\op(k_1) \otimes \op(j_{\sigma^{-1}(1)}) \otimes \cdots \otimes \op(j_{\sigma^{-1}(k_1)}) \otimes \cdots \otimes \op(k_a) \otimes \op(...)  \ar[d]^{\gamma \otimes \cdots \otimes \gamma} \\ 
\op(j_{\sigma^{-1}(1)} + \cdots + j_{\sigma^{-1}(k_1)}) \otimes \cdots \otimes \op(j_{\sigma^{-1}(b-k_a+1)}+ \cdots + j_{\sigma^{-1}(b)})
} \]

 \begin{figure} 
 \centering
 \begin{subfigure}[t]{.75\linewidth}
 \centering
        \resizebox{\linewidth}{!}{
\begin{tikzpicture}
  \coordinate (a1) at (0,2);
  \coordinate (a2) at (0,4.5);
  
  \node at (-1,2)(aa1){$1$};
  \node at (-1,4.5)(aa2){$2$};
  
  \node at (1,1.5)(b1){$1$};
  \node at (1,2.5)(b2){$2$};
  \node at (1,3.5)(b3){$1$};
  \node at (1,4.5)(b4){$2$};
  \node at (1,5.5)(b5){$3$};
    
  \node at (2,1.5)(e1){$1$};
  \node at (2,2.5)(e2){$2$};
  \node at (2,3.5)(e3){$3$};
  \node at (2,4.5)(e4){$4$};
  \node at (2,5.5)(e5){$5$};

  \node at (3,1.5)(f1){$3$};
  \node at (3,2.5)(f2){$1$};
  \node at (3,3.5)(f3){$2$};
  \node at (3,4.5)(f4){$5$};
  \node at (3,5.5)(f5){$4$};

  \node at (4,0)(cc1){$1$};
  \node at (4,1)(cc2){$2$};
  \node at (4,3)(cc3){$3$};
  \node at (4,5.5)(cc4){$4$};
  \node at (4,7)(cc5){$5$};

  \coordinate (c1) at (5,0);
  \coordinate (c2) at (5,1);
  \coordinate (c3) at (5,3);
  \coordinate (c4) at (5,5.5);
  \coordinate (c5) at (5,7);

  \node at (6,0)(d1){$1$};
  \node at (6,1)(d2){$1$};
  \node at (6,2)(d3){$1$};
  \node at (6,3)(d4){$2$};
  \node at (6,4)(d5){$3$};

  \node at (6,5) (d6){$1$};
  \node at (6,6) (d7){$2$};
  \node at (6,7) (d8){ $1$};
  
  \node at (7,0)(g1){$1$};
  \node at (7,1)(g2){$2$};
  \node at (7,2)(g3){$3$};
  \node at (7,3)(g4){$4$};
  \node at (7,4)(g5){$5$};

  \node at (7,5) (g6){$6$};
  \node at (7,6) (g7){$7$};
  \node at (7,7) (g8){ $8$};

  \draw (aa1) -- (a1) -- (b1);
  \draw (a1) -- (b2);
  \draw (aa2) -- (a2) -- (b3);
  \draw (a2) -- (b4);
  \draw (a2) -- (b5);

  \draw (cc1)--(c1) -- (d1);
  \draw (cc2)--(c2) -- (d2);
  \draw (cc3)--(c3) -- (d3);
  \draw (c3) -- (d4);
  \draw (c3) -- (d5);
  \draw (cc4)--(c4)-- (d6);
  \draw (c4)-- (d7);
  \draw (cc5)--(c5) -- (d8);

  \draw [dotted] (b5)--(e5)--(f5)-- (cc4);
  \draw [dotted] (b4)--(e4)--(f4)-- (cc5);
  \draw [dotted] (b3)--(e3)--(f3)-- (cc2);
  \draw [dotted] (b2)--(e2)--(f2)-- (cc1);
  \draw [dotted] (b1)--(e1)--(f1)-- (cc3);

  \draw [dotted] (d1)--(g1);
  \draw [dotted] (d2)--(g2);
  \draw [dotted] (d3)--(g3);
  \draw [dotted] (d4)--(g4);
  \draw [dotted] (d5)--(g5);
  \draw [dotted] (d6)--(g6);
  \draw [dotted] (d7)--(g7);
  \draw [dotted] (d8)--(g8);

\end{tikzpicture}}
\caption{Morphisms in $\cC_\op(2,5)$ and $\cC_\op(5,8)$.}
 \end{subfigure}
 \begin{subfigure}[t]{.45\linewidth}
 \centering
        \resizebox{\linewidth}{!}{

\begin{tikzpicture}
  \coordinate (a1) at (0,2);
  \coordinate (a2) at (0,4.5);
  
  \node at (-1,2)(aa1){$1$};
  \node at (-1,4.5)(aa2){$2$};
  
  \node at (1,1.5)(b1){$1$};
  \node at (1,2.5)(b2){$2$};
  \node at (1,3.5)(b3){$3$};
  \node at (1,4.5)(b4){$4$};
  \node at (1,5.5)(b5){$5$};

  \node at (2.5,1.5)(f1){$3$};
  \node at (2.5,2.5)(f2){$1$};
  \node at (2.5,3.5)(f3){$2$};
  \node at (2.5,4.5)(f4){$5$};
  \node at (2.5,5.5)(f5){$4$};

  \node at (4,3)(cc1){$1$};
  \node at (4,4)(cc2){$2$};
  \node at (4,1)(cc3){$3$};
  \node at (4,6.5)(cc4){$4$};
  \node at (4,5)(cc5){$5$};

  \coordinate (c1) at (5,3);
  \coordinate (c2) at (5,4);
  \coordinate (c3) at (5,1);
  \coordinate (c4) at (5,6.5);
  \coordinate (c5) at (5,5);

  \node at (6,3)(d1){$1$};
  \node at (6,4)(d2){$2$};
  \node at (6,0)(d3){$3$};
  \node at (6,1)(d4){$4$};
  \node at (6,2)(d5){$5$};

  \node at (6,6) (d6){$6$};
  \node at (6,7) (d7){$7$};
  \node at (6,5) (d8){ $8$};

  \draw (aa1) -- (a1) -- (b1);
  \draw (a1) -- (b2);
  \draw (aa2) -- (a2) -- (b3);
  \draw (a2) -- (b4);
  \draw (a2) -- (b5);

  \draw (cc1)--(c1) -- (d1);
  \draw (cc2)--(c2) -- (d2);
  \draw (cc3)--(c3) -- (d3);
  \draw (c3) -- (d4);
  \draw (c3) -- (d5);
  \draw (cc4)--(c4)-- (d6);
  \draw (c4)-- (d7);
  \draw (cc5)--(c5) -- (d8);

  \draw [dotted] (b5)--(f5)-- (cc4);
  \draw [dotted] (b4)--(f4)-- (cc5);
  \draw [dotted] (b3)--(f3)-- (cc2);
  \draw [dotted] (b2)--(f2)-- (cc1);
  \draw [dotted] (b1)--(f1)-- (cc3);

\end{tikzpicture}}
\caption{Acting by the symmetric group.}
\end{subfigure}
 \begin{subfigure}[t]{.45\linewidth}
 \centering
        \resizebox{\linewidth}{!}{

\begin{tikzpicture}
  \coordinate (a1) at (0,2);
  \coordinate (a2) at (0,4.5);
  
  \node at (-1,2)(aa1){$1$};
  \node at (-1,4.5)(aa2){$2$};
  
  \coordinate (b1) at  (1,1.5);
  \coordinate (b2) at   (1,2.5);
  \coordinate (b3) at   (1,3.5);
  \coordinate (b4) at   (1,4.5);
  \coordinate (b5) at   (1,5.5);

  \coordinate (f1) at   (1.5,1.5);
  \coordinate (f2) at   (1.5,2.5);
  \coordinate (f3) at   (1.5,3.5);
  \coordinate (f4) at   (1.5,4.5);
  \coordinate (f5) at   (1.5,5.5);

  \coordinate (cc1) at   (2,3);
  \coordinate (cc2) at   (2,4);
  \coordinate (cc3) at   (2,1);
  \coordinate (cc4) at   (2,6.5);
  \coordinate (cc5) at   (2,5);

  \coordinate (c1) at (3,3);
  \coordinate (c2) at (3,4);
  \coordinate (c3) at (3,1);
  \coordinate (c4) at (3,6.5);
  \coordinate (c5) at (3,5);

  \node at (4,3)(d1){$4$};
  \node at (4,4)(d2){$1$};
  \node at (4,0)(d3){$1$};
  \node at (4,1)(d4){$2$};
  \node at (4,2)(d5){$3$};

  \node at (4,6) (d6){$3$};
  \node at (4,7) (d7){$4$};
  \node at (4,5) (d8){ $2$};

  \node at (5,3)(e1){$4$};
  \node at (5,4)(e2){$5$};
  \node at (5,0)(e3){$1$};
  \node at (5,1)(e4){$2$};
  \node at (5,2)(e5){$3$};

  \node at (5,6) (e6){$7$};
  \node at (5,7) (e7){$8$};
  \node at (5,5) (e8){ $6$};
  
    \node at (6,3)(g1){$1$};
  \node at (6,4)(g2){$2$};
  \node at (6,0)(g3){$3$};
  \node at (6,1)(g4){$4$};
  \node at (6,2)(g5){$5$};

  \node at (6,6) (g6){$6$};
  \node at (6,7) (g7){$7$};
  \node at (6,5) (g8){ $8$};

  \draw (aa1) -- (a1) -- (b1);
  \draw (a1) -- (b2);
  \draw (aa2) -- (a2) -- (b3);
  \draw (a2) -- (b4);
  \draw (a2) -- (b5);

  \draw (cc1)--(c1) -- (d1);
  \draw (cc2)--(c2) -- (d2);
  \draw (cc3)--(c3) -- (d3);
  \draw (c3) -- (d4);
  \draw (c3) -- (d5);
  \draw (cc4)--(c4)-- (d6);
  \draw (c4)-- (d7);
  \draw (cc5)--(c5) -- (d8);

  \draw (b5)--(f5)-- (cc4);
  \draw (b4)--(f4)-- (cc5);
  \draw  (b3)--(f3)-- (cc2);
  \draw (b2)--(f2)-- (cc1);
  \draw  (b1)--(f1)-- (cc3);

 \draw [dotted] (d1)--(e1)--(g1);
 \draw [dotted] (d2)--(e2)--(g2);
 \draw [dotted] (d3)--(e3)--(g3);
 \draw [dotted] (d4)--(e4)--(g4);
 \draw [dotted] (d5)--(e5)--(g5);
 \draw [dotted] (d6)--(e6)--(g6);
 \draw [dotted] (d7)--(e7)--(g7);
 \draw [dotted] (d8)--(e8)--(g8);

\end{tikzpicture}}
\caption{The final composite}
\end{subfigure}
\caption{An example of composition in the category $\cC_\op$.}\label{fig:tree_composition}
\end{figure}

\vspace{3pt}
\noindent{\bf Monoidal structure.}  
The addition of natural numbers defines the monoidal structure on the objects of $\cC_\op.$ On morphisms
the monoidal product 
\[
  \cC_\op(a,b)\otimes \cC_{\op}(c,d)\to  \cC_\op(a+c,b+d)
\]
is the map
\begin{align*}
  (\op(k_1) &\otimes \cdots \otimes \op(k_a))\times_{\Sigma _{k_1} \times \dots \times \Sigma _{k_a}} {\Sigma_b}
  \otimes (\op(j_1) \otimes \cdots \otimes \op(j_c))\times_{\Sigma _{j_1} \times \dots \times \Sigma _{j_c}} {\Sigma_d}
  \\
  &\to 
  \left(\op(k_1) \otimes \cdots \otimes \op(k_a)
  \otimes \op(j_1) \otimes \cdots \otimes \op(j_c)\right)\times_{\Sigma_{k_1}\times
  \ldots   \Sigma_{k_a}\times \Sigma _{j_1} \times \dots \times \Sigma _{j_c}} {\Sigma_{b+d}}
\end{align*}
induced by the canonical inclusion 
$\Sigma_b \times \Sigma_d \to \Sigma _{b+d}$.
Note that $\cC_\op(a,b)\otimes \cC_\op(c,d)$ has a left action by $\Sigma_a\times \Sigma_c$ and a right action by $\Sigma_b\times \Sigma_d$. 

If $(a,b)\in\Sigma_{b+a}$ is the permutation that permutes the first $a$ entries to the end, 
\[(U\otimes \ldots \otimes U)\times *\xto{\unit\otimes \ldots \otimes \unit\otimes (a,b)}  \op(1)\otimes \ldots
\otimes \op(1)\times_{\Sigma_1\times \ldots \times \Sigma_1}\Sigma_{b+a}\subset \cC_\op(a+b,b+a)\]
defines an isomorphism from $a+b$ to $b+a$.  
Compatibility with block permutation makes this a natural transformation.

This construction extends to an equivalence of categories.

\begin{prop}\label{smcoperad}
  There is an equivalence of categories between the category of operads in $\cD$ and 
  the category of strict symmetric monoidal categories enriched in $\cD$ such that
  \begin{enumerate}
    \item\label{item:monoid_objects}  the monoid of objects is $(\mathbb N , +, 0)$;
    \item\label{item:equivariant}  
    {there is a monoid homomorphism $\Sigma_b\to \cC(b,b)$ for all objects $b$;}
    \item\label{item:maps} the map
    \[
      \coprod_{(k_1, \dots, k_a), \Sigma k_i = b} (\cC(1,k_1) \otimes \cdots \otimes \cC(1,k_a))
      \times_{\Sigma _{k_1} \times \dots \times \Sigma _{k_a}} {\Sigma_b} \to \cC(a,b)
    \]
    defined by the monoidal structure and composition is an isomorphism  for all $a\geq 1$ and $b$; 
    the left action by $\Sigma_a \subset \cC (a, a)$ corresponds to permuting the factors in the tensor product;  
    and  the identity map is the only map with source $0$.
  \end{enumerate}
\end{prop}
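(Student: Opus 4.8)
The plan is to realize $\op\mapsto\cC_\op$ as one half of an equivalence, with inverse sending a category $\cC$ satisfying (1)--(3) to the operad $\op_\cC$ with $\op_\cC(n):=\cC(1,n)$. I would organize the verification into four steps: (i) $\cC_\op$ is a category of the asserted type; (ii) $\op_\cC$ is an operad; (iii) the composites $\op\mapsto\op_{\cC_\op}$ and $\cC\mapsto\cC_{\op_\cC}$ are naturally isomorphic to the respective identity functors; (iv) both assignments respect morphisms.

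For step (i), conditions (1) and (2) hold by construction, and (3) is immediate since $\cC_\op(1,n)=\op(n)\times_{\Sigma_n}\Sigma_n\cong\op(n)$ and the coproduct indexing $\cC_\op(a,b)$ is exactly the one appearing in (3). The substance is that the composition described above descends to the coequalizers, is associative and unital, and that $\otimes$ defines a strict monoidal bifunctor with symmetry the block permutation $(a,b)$. Each of these unwinds -- once the $\Sigma_{k_i}$- and $\Sigma_b$-quotients have been absorbed, for which the block-permutation formula $\sigma(k_1,\dots,k_a)$ is precisely what is required -- into an instance of the associativity, unit, or equivariance axiom of $\gamma$; the hexagon axioms reduce to identities among the permutations $(a,b)$, and naturality of the symmetry is the block-permutation compatibility noted just before the proposition.

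For step (ii), I would give $\op_\cC$ the unit $\id_1\colon U\to\cC(1,1)$, let $\Sigma_n$ act on $\cC(1,n)$ by post-composition with the image of $\Sigma_n$ under the homomorphism of (2), and define $\gamma$ as the composite
\[
  \cC(1,k)\otimes\cC(1,j_1)\otimes\cdots\otimes\cC(1,j_k)
  \longrightarrow \cC(1,k)\otimes\cC(k,j)
  \longrightarrow \cC(1,j),\qquad j=j_1+\cdots+j_k,
\]
where the first arrow is $\id$ tensored with the iterated monoidal product $\cC(1,j_1)\otimes\cdots\otimes\cC(1,j_k)\to\cC(k,j)$ and the second is composition in $\cC$. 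The operad axioms for $\op_\cC$ then follow from associativity and unitality of composition, bifunctoriality and strictness of $\otimes$, and compatibility of the homomorphisms $\Sigma_b\to\cC(b,b)$ with $\otimes$ (this supplies the two equivariance axioms); the degenerate inputs $j_s=0$ and $k=0$ are handled using that $\cC(0,b)$ is initial for $b>0$, that $\cC(0,0)=U$, and the last clause of (3).

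For step (iii), $\op_{\cC_\op}(n)=\cC_\op(1,n)\cong\op(n)$ compatibly with $\unit$, the symmetric group action, and $\gamma$, so $\op\mapsto\op_{\cC_\op}$ is naturally isomorphic to the identity. Conversely, for $a\geq1$ the canonical map $\cC_{\op_\cC}(a,b)\to\cC(a,b)$ is exactly the isomorphism asserted in (3), while the case $a=0$ follows by matching the description of $\cC_\op(0,b)$ with the clause of (3) saying the identity is the only morphism out of $0$; one then checks this family of isomorphisms is strictly monoidal and compatible with composition and the $\Sigma$-actions, hence assembles into an isomorphism of categories natural in $\cC$. Step (iv) -- functoriality on morphisms, namely maps of operads on one side and object-fixing enriched strict symmetric monoidal functors on the other -- is routine given (i)--(iii). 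The main obstacle throughout is the bookkeeping in (i) and (iii): confirming that composition in $\cC_\op$ is independent of the chosen representatives modulo the $\Sigma$-actions and that the resulting identities match, term by term, the precise block-permutation form of the operad equivariance axioms, and dually that the reconstruction isomorphism of (3) is multiplicative. None of this is conceptually difficult, but it demands care in tracking the direction of every $\Sigma$-action and the reindexing of blocks under $\gamma$ and under $\otimes$ -- exactly the data that Figures \ref{fig:tree_action} and \ref{fig:tree_composition} record.
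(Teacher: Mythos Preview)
Your proposal is correct and follows essentially the same approach as the paper: define $\op_\cC(n)\coloneqq\cC(1,n)$ with structure maps coming from the monoidal product and composition in $\cC$, then verify that $\op_{\cC_\op}(b)=\cC_\op(1,b)\cong\op(b)$ and that condition~(3) gives the isomorphism $\cC_{\op_\cC}(a,b)\cong\cC(a,b)$. Your treatment is in fact more thorough than the paper's, which records only the constructions and the two round-trip identifications and leaves the axiom checks and the functoriality on morphisms entirely implicit.
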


\begin{proof}
  Note that $\cC_\op$ satisfies the conditions above if $\op$ is an operad.

 For the other implication, if $\cC$ is a symmetric monoidal category satisfying the hypotheses above, define an operad $\op_\cC$ by 
  \[
    \op_\cC(n)\coloneqq \cC(1,n).
  \]  
  Then $\op_\cC(n)$ has a right action by $\Sigma_n \subset \cC (n, n)$.  The  operad structure maps 
  \[
    \op_\cC(k)\otimes (\op_\cC(j_1)\otimes \ldots \otimes \op_\cC(j_k))\to \op_\cC(j_1+\ldots +j_k)
  \]
  are the composites
  \[
    \cC(1,k)\otimes (\cC(1,j_1)\otimes \ldots \otimes \cC(1,j_k))\subset \cC(1,k)\otimes \cC(k, j_1+\ldots +j_k)\to \cC(1,j_1+\ldots +j_k).
  \]
  By assumption \ref{item:maps} this map is equivariant with respect to $\Sigma_k$ and $\Sigma_{ j_1}\times \ldots \times \Sigma_{ j_k}$.

  For an operad $\op$, 
  \[
    \op_{\cC_\op}(b)\coloneqq \cC_\op(1,b)=\op(b)\times_{\Sigma_b}\Sigma_b=\op(b).
  \]
  Starting with a symmetric monoidal category $\cC$, $\op_\cC(k)\coloneqq \cC(1,k)$ and 
  \begin{align*}
    \cC_{\op_\cC}(a,b)&
    \coloneqq \coprod_{\Sigma k_i = b} (\op_\cC(k_1) \otimes \cdots \otimes \op_\cC(k_a))\times_{\Sigma _{k_1} \times \dots \times \Sigma _{k_a}} {\Sigma_b}
    \\
    &= \coprod_{\Sigma k_i = b} (\cC(1,k_1) \otimes \cdots \otimes \cC(1,k_a))\times_{\Sigma _{k_1} \times \dots \times \Sigma _{k_a}} {\Sigma_b}
  \end{align*}
  When $\cC$ satisfies the third condition above, this agrees with $\cC(a,b)$.
\end{proof}

\section{Hammock Localization for categories enriched in simplicial sets}\label{sec:hammock}

We recall the hammock localization of \cite{DwyerKan} and its extension to simplicially enriched categories \cite[2.5]{DwyerKan}. 

For a category $\cC$ and a subcategory $\cW$, a {\bf reduced hammock} of height $n$ in $\cC$ is a commutative diagram in  $\cC$ (of arbitrary length $m\geq 0$) of the form
\[
 \xymatrix@R=2ex{
	& k_{0,1} \ar[d] \ar@{-}[r] & \cdots \ar@{-}[r] \ar[d] & k_{0,m-1} \ar[d] \ar@{-}[ddr] 
    \\
    & k_{1,1} \ar[d] \ar@{-}[r] & \cdots \ar@{-}[r] \ar[d] & k_{1,m-1} \ar[d]  \ar@{-}[dr] 
    \\ 
    a \ar@{-}[ur] \ar@{-}[uur] \ar@{-}[dr]  \ar@{-}[ddr] & \vdots \ar[d] & \vdots \ar[d] & \vdots \ar[d] & b 
    \\
    & k_{n-1,1} \ar@{-}[r] \ar[d]& \cdots \ar@{-}[r]\ar[d] & k_{n-1,m-1} \ar@{-}[ur]\ar[d]
    \\
    & k_{n,1} \ar@{-}[r] & \cdots \ar@{-}[r] & k_{n,m-1} \ar@{-}[uur]
 }
\]
satisfying the following conditions:
\begin{itemize}
 \item horizontal arrows can point either direction,
 \item vertical and left-pointing arrows are in $\cW$,
 \item in each column the horizontal arrows point in the same direction, 
 \item arrows in adjacent columns point in different directions, and 
 \item no column contains only identity maps. 
\end{itemize}
The {\bf hammock localization} of $\cC$ at $\cW$, $L^H_\cW\cC$, is a simplicially enriched category whose objects are the objects of $\cC$.  
The $n$-simplices in ${L^H_\cW\cC}(a,b)$ are the reduced hammocks of height $n$.  
We define face and degeneracy maps by deleting or repeating a row, then reducing according to the conditions above.
Composition in the category $L^H_\cW\cC$ is via concatenation of hammocks where the point of gluing is expanded to a vertical column of identity maps.

If  $O$ is a fixed set then $O-\cat$ is the category of small categories whose object set is $O$.  
Similarly, $sO-\cat$ is the categories enriched in $\sset$ whose object set is $O$. 

\begin{thm}\cite[3.4]{DwyerKan}
  Localization is a functor 
  \[\{(\cC,\cW)|\cW\subset \cC\in O-\cat \} \to sO-\cat\]
  from the category of $O$-categories with an $O$-subcategory to the category of simplicial $O$-categories.
  For each pair  $(\cC, \cW)$, there are canonical inclusion functors $\cC\to L^H_\cW\cC$ and $\cW^{op} \to L^H_\cW\cC$.
\end{thm}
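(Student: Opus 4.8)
The statement has two halves: that $L^H_\cW(-)$ is a functor, and that $\cC$ and $\cW^{\mathrm{op}}$ include canonically into $L^H_\cW\cC$. The plan is to first check that $L^H_\cW\cC$ is a well-defined simplicial $O$-category, then verify functoriality, and finally construct the two inclusions and check naturality. To see that $L^H_\cW\cC$ lies in $sO-\cat$, fix objects $a$ and $b$ and check that the reduced hammocks of height $n$ from $a$ to $b$ form a simplicial set: the face map $d_i$ deletes row $i$, the degeneracy $s_i$ repeats it, and in both cases the resulting diagram may fail to be reduced, so one must \emph{reduce}, that is, compose horizontal arrows that now sit in adjacent columns pointing the same way and discard any column consisting only of identities. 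The crux is that this reduction is well defined, i.e.\ it terminates in a unique reduced hammock independent of the order in which moves are performed. I would prove this by a confluence argument: each elementary move strictly decreases the number of columns, so by Newman's lemma it suffices to check local confluence on the finitely many overlap configurations (two mergeable pairs of columns, a mergeable pair adjacent to an identity column, and so on), or else one exhibits an explicit normal form directly. Granting well-definedness of reduction, the simplicial identities for $L^H_\cW\cC(a,b)$ descend from those for unreduced hammocks together with uniqueness of the reduced form. Composition $L^H_\cW\cC(a,b)\otimes L^H_\cW\cC(b,c)\to L^H_\cW\cC(a,c)$ is horizontal concatenation, gluing through a column of identities, followed by reduction; it is a simplicial map because concatenation commutes with deleting and repeating rows. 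Associativity is immediate since concatenation of hammocks is strictly associative and reduction is unique, and the empty hammock at $a$ is a two-sided unit because gluing it on contributes only an identity column, which reduction removes. Since no object is ever changed, $L^H_\cW\cC$ is a category enriched in $\sset$ with object set $O$.

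For functoriality, a morphism $(\cC,\cW)\to(\cC',\cW')$ is an object-preserving functor $F\colon\cC\to\cC'$ with $F(\cW)\subseteq\cW'$. Applying $F$ to every vertex and arrow of a reduced hammock yields a hammock in $\cC'$ whose vertical and left-pointing arrows lie in $\cW'$; reducing it gives a reduced hammock, and this defines $L^H_\cW\cC(a,b)\to L^H_{\cW'}\cC'(a,b)$. Since $F$ is a functor it commutes with the compositions carried out during reduction and with deleting and repeating rows, so the map is simplicial and compatible with concatenation, hence assembles into a simplicial functor $L^H F$; that $L^H(\id)=\id$ and $L^H(GF)=L^H G\circ L^H F$ is then formal, again using uniqueness of reduced forms. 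For the inclusions, send $f\colon a\to b$ in $\cC$ to the height-$0$, length-$1$ hammock consisting of the single right-pointing arrow $f$, with $\id_a$ mapped to the empty hammock; concatenating two right-pointing columns and reducing composes them in $\cC$, so this preserves composition, and it is enriched because it realizes $\cC(a,b)$ as a set of $0$-simplices of $L^H_\cW\cC(a,b)$. Likewise, send $w\colon b\to a$ in $\cW$, regarded as a morphism $a\to b$ of $\cW^{\mathrm{op}}$, to the length-$1$ hammock with the single \emph{left}-pointing arrow $w$; this is legitimate precisely because left-pointing arrows are required to lie in $\cW$, it reverses composition for the same reason, and on objects it agrees with the previous functor. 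Both functors are natural in the pair $(\cC,\cW)$ because $L^H F$ simply applies $F$ arrow-by-arrow and therefore carries a one-column hammock to the corresponding one-column hammock.

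The genuine content is concentrated in the well-definedness of reduction: faces, degeneracies, and composition are all of the form ``perform an operation, then reduce,'' so the simplicial identities, associativity of composition, and functoriality all rest on reduction producing a unique normal form. Getting the confluence right is the one step that requires real care, in particular the interaction between merging adjacent same-direction columns and the rule that no column may consist only of identities, since merging can itself create such a column and an identity column can separate two otherwise-mergeable columns; once this is settled, the remainder follows formally from $\cC$ being a category.
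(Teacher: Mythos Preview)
The paper does not give its own proof of this statement: it is quoted as \cite[3.4]{DwyerKan} and left as a citation, with only the two sentences following the theorem describing the inclusion functors as hammocks of length one and height zero with arrows pointing right (for $\cC$) or left (for $\cW^{\mathrm{op}}$). Your description of the inclusions agrees verbatim with this.

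What you have supplied is therefore not a comparison target but a genuine expansion of the cited result. The argument you give is sound and is essentially the standard one. You are right that the substantive point is the well-definedness of reduction, and your proposed route via termination plus local confluence (Newman's lemma) is a legitimate way to handle it; the alternative you mention, exhibiting an explicit normal form directly, is in fact how Dwyer and Kan present things, since for string-shaped hammocks the normal form is easy to describe explicitly (alternately merge adjacent same-direction columns and delete identity columns from left to right until neither move applies). Either route works here. One small remark: in your treatment of the inclusion $\cC\to L^H_\cW\cC$ you correctly send $\id_a$ to the length-zero hammock rather than the length-one identity column, since the latter is not reduced; it is worth saying explicitly that this is forced by reduction and is why the inclusion is strictly unital.
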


The map $\cC\to L^H_\cW\cC$ is given by the inclusion of hammocks of length one  and height zero with arrows pointing to the right. 
Similarly, the functor $\cW^{op} \to L^H_\cW\cC$ is given by the inclusion of hammocks of length one  and height zero  with arrows pointing to the left.

We now generalize to simplicially enriched categories.  
Let $\cC_*$ be a simplicially enriched $O$-category and $\cW_*$ be an $O$-subcategory of $\cC_*$.  
For each simplicial degree $k$, let $\cC_k$ be the $O$-category whose morphisms are the $k$ simplices of  ${\cC}_*(a,b).$  
The face and degeneracy maps in the hom sets of $\cC$ define functors $\cC_k\to \cC_{k-1}$ and $\cC_k\to \cC_{k+1}$.  
We also have corresponding subcategories $\cW_k$, which form a simplicial subcategory.

The hammock localization of $\cC_k$ with respect to $\cW_k$ is a simplicially enriched category $L^H_{\cW_k}\cC_k$.   
The {\bf hammock localization} of $\cC_*$ at $\cW_*$ is a category $L^H_{\cW_*}\cC_\ast$, enriched in bisimplicial sets, 
where $L^H_{\cW_*}\cC_\ast(a,b)_{k,\ell}$ is the height $\ell$ hammocks of $L^H_{\cW_k}\cC_k(a,b)$.  
The required simplicial structure maps are defined using  the functoriality of the hammock localization.

\begin{prop}\label{loc_is_functor}
  Let $\cC_*$ and $\cC_*'$ be simplicially enriched $O$-categories with $O$-subcategories $\cW_*\subset \cC_*$ and 
  $\cW_*'\subset \cC_*'$.  A functor $F\colon \cC_*\to \cC_*'$ 
  of simplicially enriched categories such that $F(\cW_*)\subset \cW_*'$ induces a functor of bisimplicially enriched categories
  \[
    L^H_{\cW_*}\cC_*\to L^H_{\cW_*'}\cC'_*.
  \]
\end{prop}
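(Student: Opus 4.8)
The plan is to reduce this to the functoriality of the single-simplicial-degree hammock localization, which is exactly Theorem 4.? (the Dwyer--Kan result \cite[3.4]{DwyerKan} recalled above), applied degreewise and then bisimplicially. First I would observe that a functor $F\colon\cC_*\to\cC_*'$ of simplicially enriched $O$-categories is the same data as a collection of functors $F_k\colon\cC_k\to\cC_k'$ of (ordinary) $O$-categories, one for each simplicial degree $k$, strictly compatible with the face and degeneracy functors $\cC_k\to\cC_{k\pm 1}$; this is immediate from the definition of $\cC_k$ given just above the proposition. The hypothesis $F(\cW_*)\subset\cW_*'$ says precisely that each $F_k$ carries the subcategory $\cW_k$ into $\cW_k'$.

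Next I would apply the Dwyer--Kan functoriality theorem to each pair $(\cC_k,\cW_k)$: since $F_k$ is an $O$-functor with $F_k(\cW_k)\subset\cW_k'$, it induces a functor of simplicially enriched $O$-categories $L^H_{\cW_k}\cC_k\to L^H_{\cW_k'}\cC_k'$. Concretely this map sends a reduced hammock in $\cC_k$ to its image hammock in $\cC_k'$ --- note that applying $F_k$ preserves commutativity of the hammock diagram, preserves the direction of horizontal arrows, and sends vertical and left-pointing arrows (which lie in $\cW_k$) to arrows in $\cW_k'$; the only subtlety is that $F_k$ may send a non-identity map to an identity, so after applying $F_k$ one must re-reduce the hammock by collapsing any columns that have become all-identity and merging adjacent columns pointing the same way. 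This re-reduction is exactly the operation already used to define the face and degeneracy maps of $L^H_\cW\cC$, so it is built into the Dwyer--Kan construction and I would simply cite it rather than redo it.

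Then I would assemble these degreewise maps into a single functor of bisimplicially enriched categories. Recall $L^H_{\cW_*}\cC_*(a,b)_{k,\ell}$ is the set of height-$\ell$ hammocks in $L^H_{\cW_k}\cC_k(a,b)$. The maps $L^H_{\cW_k}\cC_k\to L^H_{\cW_k'}\cC_k'$ induce maps on height-$\ell$ hammocks for every $\ell$, giving maps $L^H_{\cW_*}\cC_*(a,b)_{k,\ell}\to L^H_{\cW_*'}\cC_*'(a,b)_{k,\ell}$ for all $k,\ell$. It remains to check that these commute with all four families of bisimplicial structure maps. Compatibility with the structure maps in the $\ell$-direction (adding/deleting/repeating a hammock row) is the statement that each individual map $L^H_{\cW_k}\cC_k\to L^H_{\cW_k'}\cC_k'$ is a simplicial functor, which is part of the Dwyer--Kan output. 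Compatibility with the structure maps in the $k$-direction follows from the naturality clause of the Dwyer--Kan functoriality theorem applied to the face/degeneracy functors $\cC_k\to\cC_{k\pm1}$ and their primed counterparts, together with the fact that $F$ commutes with these on the nose. That $L^H_{\cW_*}\cC_*\to L^H_{\cW_*'}\cC_*'$ is itself a functor of enriched categories --- i.e. respects identities and composition (concatenation of hammocks with an inserted identity column) --- is again inherited degreewise from the $L^H_{\cW_k}$ being functors.

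The only genuine point requiring care, and hence the main obstacle, is the re-reduction step: one must be confident that ``apply $F$ then re-reduce'' is well defined and strictly functorial, i.e.\ that reducing commutes with applying $F$ up to the identifications built into $L^H$, and that no choices are involved. Since the reduction procedure (collapse all-identity columns, then fuse adjacent like-pointing columns) is deterministic and is precisely the procedure packaged into the simplicial structure of $L^H_\cW\cC$ in \cite[2.5]{DwyerKan}, I would handle this by invoking that construction directly rather than re-proving it; everything else in the proof is a formal degreewise-then-bisimplicial bookkeeping argument built on \cite[3.4]{DwyerKan} and its naturality.
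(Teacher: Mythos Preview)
Your proposal is correct and follows essentially the same approach as the paper: apply the Dwyer--Kan functoriality result \cite[3.4]{DwyerKan} degreewise to obtain maps $L^H_{\cW_k}\cC_k\to L^H_{\cW_k'}\cC_k'$, then assemble these into a functor of bisimplicially enriched categories. The paper's proof is a terse two sentences to this effect; you have simply unpacked the bookkeeping (bisimplicial compatibility, the re-reduction step) that the paper leaves implicit in the citation.
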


\begin{proof}
  By the corresponding result for unenriched categories \cite[3.4]{DwyerKan}, there are maps 
  $L^H_{\cW_k}\cC_k\to L^H_{\cW_k'}\cC_k'$.   These assemble to a functor $L^H_{\cW_*}\cC_\ast\to L^H_{\cW_*'}\cC_\ast'$ 
  of categories enriched in bisimplicial sets.  Applying the diagonal functor induces a simplicial functor as in the statement. 
\end{proof}

\begin{prop}\label{loc_we}
  Let $\phi \colon b\to c$ be in $\cW_*$.   Then for every object $a$ in $\cC$, $\phi$ induces weak homotopy equivalences
  \[
    \phi_*\colon L^H_{\cW_*}\cC_*(a,b)\to L^H_{\cW_*}\cC_*(a,c) \text{  and  }
    \phi^*\colon L^H_{\cW_*}\cC_*(c,a) \to L^H_{\cW_*}\cC_*(b,a)
  \]
\end{prop}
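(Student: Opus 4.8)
The plan is to reduce to the analogous statement for ordinary (unenriched) hammock localizations and then propagate it through the diagonal. First I would observe that for each fixed simplicial degree $k$, the morphism $\phi$ restricts to an arrow $\phi_k$ in $\cW_k \subset \cC_k$, and the map $\cite[\text{3.5}]{DwyerKan}$ tells us that for an ordinary category with a subcategory, postcomposition (resp. precomposition) with a morphism of $\cW$ induces a weak equivalence on hammock mapping spaces. Thus $(\phi_k)_* \colon L^H_{\cW_k}\cC_k(a,b) \to L^H_{\cW_k}\cC_k(a,c)$ is a weak homotopy equivalence of simplicial sets for every $k$. The key mechanism behind that Dwyer--Kan result is that $\phi$, being in $\cW$, already has a formal inverse available in the localization given by the length-one height-zero hammock pointing the other way, so $(\phi_k)_*$ admits an explicit homotopy inverse built by gluing on that backward hammock; one checks the two composites are connected to the identity by explicit hammock homotopies (expanding a gluing point to a column of identities, as in the definition of composition).

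Next I would assemble these degreewise equivalences. The bisimplicial mapping object $L^H_{\cW_*}\cC_*(a,b)$ has $(k,\ell)$-simplices the height-$\ell$ hammocks in $L^H_{\cW_k}\cC_k(a,b)$; in other words, in the $k$-direction it is the simplicial object $k \mapsto L^H_{\cW_k}\cC_k(a,b)$. The induced map $\phi_*$ is, level-wise in $k$, exactly the map $(\phi_k)_*$ just shown to be a weak equivalence. A map of bisimplicial sets that is a weak equivalence in each $k$-level induces a weak equivalence on diagonals (realizations); applying the diagonal functor, as in the proof of \autoref{loc_is_functor}, gives that $\phi_* \colon L^H_{\cW_*}\cC_*(a,b) \to L^H_{\cW_*}\cC_*(a,c)$ is a weak homotopy equivalence. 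The argument for $\phi^* \colon L^H_{\cW_*}\cC_*(c,a) \to L^H_{\cW_*}\cC_*(b,a)$ is identical, using precomposition in place of postcomposition and noting that left-pointing arrows (hence the backward hammock of $\phi$) are always permitted.

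The main obstacle is making the first step genuinely rigorous rather than merely quoting \cite{DwyerKan}: one must confirm that the Dwyer--Kan calibration argument (their Proposition 3.3/3.5, that $L^H$ sends morphisms of $\cW$ to homotopy equivalences) applies verbatim to \emph{each} $\cC_k$, uniformly enough that the homotopy inverses and the connecting homotopies are natural in $k$ — this naturality is what lets them assemble into a map of bisimplicial sets and survive the diagonal. In practice the Dwyer--Kan constructions are completely explicit and functorial in the pair $(\cC,\cW)$, so the simplicial structure maps commute with everything in sight, and the uniformity is automatic; still, this is the point that deserves a sentence of justification. A secondary (routine) point is citing the standard fact that a level-wise weak equivalence of bisimplicial sets is a diagonal weak equivalence, which is where the real homotopy-theoretic content is absorbed.
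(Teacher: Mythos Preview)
Your proof is correct and matches the paper's approach exactly: reduce to the degreewise Dwyer--Kan result (the paper cites \cite[3.3]{DwyerKan}) and then invoke the fact that a levelwise weak equivalence of bisimplicial sets induces a weak equivalence on diagonals (the paper cites \cite[4.1.7]{goerssjardine}). Your worry about naturality of the homotopy inverses in $k$ is unnecessary: the map $\phi_*$ is already a map of bisimplicial sets by construction, so one only needs each $(\phi_k)_*$ to be a weak equivalence---the inverses and homotopies need not assemble.
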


\begin{proof}
  By \cite[3.3]{DwyerKan}, the maps $(\phi_k)_*\colon L^H_{\cW_k}\cC_k(a,b) \to L^H_{\cW_*}\cC_k(a,c)$
  and $(\phi_k)^*\colon L^H_{\cW_*}\cC_k(c,a) \to L^H_{\cW_*}\cC_k(b,a)$ 
  are weak homotopy equivalences.  Then \cite[4.1.7]{goerssjardine} implies they
  are weak homotopy equivalences.  
\end{proof}

 \section{Localization for Operads}\label{sec:loc_operads}

Ideally, we would like to consider an operad as one of the symmetric monoidal categories identified in Section \ref{sec:smc}, 
and apply the hammock localization as described in Section \ref{sec:hammock}.
However, in order to translate back to operads, we would need to show that the localized category is of the form  prescribed by \autoref{smcoperad}.
Unfortunately, there is an obstacle. In order to define the monoidal product on hammocks 
one has to struggle with the fact that different hammocks have different lengths, 
and any naive attempt to extend hammocks via identities seems to lead to 
a monoidal product that no longer commutes with composition and hence will not be functorial.

\vskip .1in
Here we instead give a construction working directly with the operad. 
The standard hammock localization applied to the monoid of 1-ary operations $\op (1)$  
is a subcategory of the localization of $\op(1)$ constructed in this section. 
Indeed,  our procedure will construct a localization of the associated strict symmetric monoidal category as described in \autoref{smcoperad}.
We expect that this localization will  yield a category that is homotopic to the standard hammock localization.

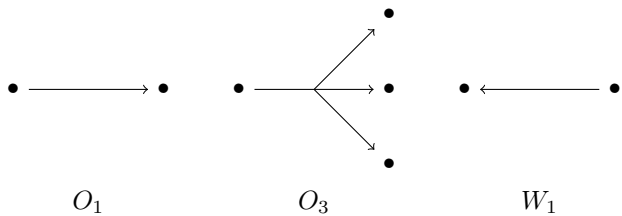
\begin{figure}
\centering
\begin{tikzpicture}
\node at (-3,0)(a01) {$\bullet$};
\node at (-1,0)(a0) {$\bullet$};
\draw [->](a01) --(a0);

\node at (-2, -1.5) {$O_1$};

\node at (0,0)(a1) {$\bullet$};
\coordinate (t1) at (1, 0);
\node at (1, -1.5) {$O_3$};
\node at (2,1)(a22) {$\bullet$};
\node at (2,0)(a23) {$\bullet$};

\node at (2,-1)(a32) {$\bullet$};

\node at (3,0)(a33) {$\bullet$};

\node at (5, 0)(a43) {$\bullet$};
\node at (4, -1.5) {$W_1$};

\draw [->](a1) -- (t1)-- (a22);
\draw [->](t1)--(a23);
\draw [->](t1)--(a32);
\draw [<-](a33) --(a43);
\end{tikzpicture}
\caption{Atomic building blocks. The root node is on the left for each tree.}\label{fig:block}
\end{figure}
\vskip .1in
The major difference between this section and the previous is that we now allow hammocks where 
the rows have been replaced by trees with one root and $n$ labeled leaves.
For $n \geq 0$, let $\mathbb{T}(n)$ be the set of directed planar trees with one root and 
$n$ leaves labeled $1$ through $n$ and possibly some other leaves without label.
Each tree $\tau \in \mathbb T(n)$ is constructed from atomic, directed pieces.
These atomic pieces are
\[
  O_n\colon 1 \to n \text { for } n\geq 0,  \text { and }  W_1\colon 1 \leftarrow 1.
 \]
{See \autoref{fig:block}.}
Each tree has a root node (the source for $O_n$ and the target for $W_1$) and leaf nodes; 
each tree is constructed by gluing together atomic pieces  identifying the root node of one tree with a labeled leaf node of another tree; 
the resulting node will be called an internal node.
\footnote{The nodes are not the vertices in the underlying graph. For example
$O_n\colon 1 \to n$ has $n+1$ nodes corresponding to its boundary but no internal one.}

\vskip .1in
Let $\op$ be an operad and let $\cW$ be a sub-monoid of operations in $\op(1)$.
A {\bf reduced tree hammock} of height $k$ and type $\tau \in \mathbb T(n)$  is a three dimensional diagram consisting 
of $k$ copies of $\tau$ arranged in parallel (horizontal) planes with additional (vertical and downward pointing) directed edges 
connecting corresponding roots, leaves, and internal nodes in consecutive copies of $\tau$. 
 Each atomic piece in $\tau$ and each vertical edge is labeled by an element in $\op$ so that the diagram commutes. 
We further require that 
\begin{enumerate}
  \item  atomic pieces $O_n\colon 1 \to n$ are labeled by elements in $\op(n)$;
  \item  atomic pieces $W_1\colon 1 \leftarrow 1$ and vertical arrows are labeled by elements in $\cW$;
  \item  arrows in adjacent columns of atomic pieces point in different directions\footnote{In particular, for an atomic piece $O_n: 1 \to n$ all $n+1$ adjacent pieces will have to be of the form $W_1: 1 \leftarrow 1$}; and
  \item  no column of atomic pieces corresponding to $O_1$ or $W_1$ contains arrows all labeled by the identity element.  
  (Note that the vertical arrows can all be identity maps.)
\end{enumerate}
Note that if $\tau$ is made up of atomic pieces $O_1\colon 1 \to 1 $ and $W_1\colon 1 \leftarrow 1$, this is a reduced hammock  in the sense of \autoref{sec:hammock} for the monoid $\op(1)$ viewed as a category.

We have removed the initial and final maps of the hammocks. See \autoref{fig:tree_hammock} for an example of this type of hammock. 
\begin{figure}
\centering
\tdplotsetmaincoords{60}{120}

\begin{tikzpicture}[tdplot_main_coords]
\node at (0,0,0)(a1) {$\bullet$};
\coordinate (t1) at (0, 1,0);
\node at (1,2,0)(a22) {$\bullet$};
\node at (-1,2,0)(a23) {$\bullet$};

\node at (1,4,0)(a32) {$\bullet$};
\node at (-1,4,0)(a33) {$\bullet$};

\coordinate (t3) at (-1, 5,0);

\node at (0,6,0)(a43) {$\bullet$};
\node at (-2,6,0)(a44) {$\bullet$};

\node at (0,0,-3)(b1) {$\bullet$};

\coordinate (t2) at (0, 1,-3);
\node at (1,2,-3)(b22) {$\bullet$};
\node at (-1,2,-3)(b23) {$\bullet$};
\node at (1,4,-3)(b32) {$\bullet$};
\node at (-1,4,-3)(b33) {$\bullet$};
\coordinate (t4) at (-1, 5,-3);

\node at (0,6,-3)(b43) {$\bullet$};
\node at (-2,6,-3)(b44) {$\bullet$};

\draw [dotted,->] (a1)--node[auto]{$a_1$}(b1);
\draw [dotted,->] (a22)--node[auto]{$a_2$}(b22);
\draw [dotted,->] (a23)--node[right, pos= 0.25]{$a_3$}(b23);
\draw [dotted,->] (a32)--node[right, pos=0.35]{$a_4$}(b32);
\draw [dotted,->] (a33)--node[right, pos= 0.45]{$a_5$}(b33);

\draw [dotted,->] (a43)--node[auto]{$a_6$}(b43);
\draw [dotted,->] (a44)--node[auto]{$a_7$}(b44);

\draw [->](a1) -- (t1)--node[above, pos=0.15] {$\alpha_1$} (a22);
\draw [<-](a22) --node[below, pos=0.45] {$w_{1,1}$}(a32);
\draw [->](t1)--(a23);
\draw [<-](a23) --node[above, pos=0.45] {$w_{1,2}$}(a33);
\draw [->](a33) --(t3)-- node[above, pos=0.15] {$\gamma_1$}(a43);
\draw [->](t3)--(a44);
\draw [->](b1)--(t2) -- node[above, pos=0.15] {$\alpha_2$}(b22);
\draw [<-](b22) --node[below, pos=0.45] {$w_{2,1}$}(b32);

\draw [->](b1)--(t2)--(b23);
\draw [<-](b23) --node[below, pos=0.25] {$w_{2,2}$}(b33);
\draw [->](b33) --(t4)-- node[above, pos=0.15] {$\gamma_2$}(b43);
\draw [->](t4)--(b44);

\end{tikzpicture}
\caption{A reduced tree hammock of height one.\\
We require 
$w_{1,1}$, $w_{1,2}$, $w_{2,1}$, $w_{2,2}$ and $ a_i$, $i=1,\ldots 7$, to be operations in $\cW$ and $\gamma(\alpha_1;a_2,a_3)=\gamma(a_1;\alpha_2)$, $\gamma(w_{1,1};a_2)=\gamma(a_4; w_{2,1})$, $\gamma(w_{1,2};a_3)=\gamma(a_5; w_{2,2})$, and $\gamma(\gamma_1;a_6,a_7)=\gamma(a_5;\gamma_2)$}\label{fig:tree_hammock}
\end{figure}

For each nonnegative integer $n$, let $L\op(n)$ be the simplicial set whose $k$ simplices are the reduced tree hammocks of height $k$ and type $\tau\in \mathbb{T}(n)$. 
Simplicial face maps are given by deleting a plane and composing adjacent vertical maps. Degeneracy maps are given by repeating a plane. 
The simplicial sets $L\op(n)$ assemble to an operad where the $i$th composition $\circ _i$ 
is defined by grafting the underlying trees, that is, identifying the leaf node labeled $i$ with the root node 
of the $i$th tree and relabeling leaf nodes as appropriate. 
If the tree hammock resulting from any of these operations is not in reduced form it can easily be made so by 
composing operations for columns of  neighboring arrows pointing in the same direction, and  by deleting columns of identities.

\vskip .1in
As for the  hammocks defined in the previous section, the construction above extends to simplicially enriched operads\footnote{  
If $\cD$ is the category of simplicial sets, we will say that such an operad in $\cD$ is a \textbf{simplicial operad} or is \textbf{simplicially enriched}.
}. 
For a simplicially enriched  operad $\op$ and a sub-monoid $\cW \subset \op(1)$, let 
$L^{TH} _\cW \op (n)$ (or $L\op(n)$) be the bisimplicial set of all reduced hammocks of tree type $\tau$ for some $\tau \in \mathbb T(n)$.  
The symmetric group acts on 
$ L^{TH} _\cW \op (n)$ 
by relabeling the labeled leaf nodes.  Grafting of tree hammocks (and reduction if necessary)  defines an associative and equivariant composition, and  we thus define the {\bf tree hammock localization} of a simplicially enriched operad $\op$ with respect to the sub-monoid $\cW \subset \op(1)$ to be the operad 
\[
  L^{TH}_\cW \op 
\]  
enriched in bisimplicial sets.

\vskip .2in
We have the following analogues of Proposition \ref{loc_is_functor} and \ref{loc_we}.

\begin{lem}[Functoriality]\label{op_loc_functor}
  Let $\op$ and $\op'$ be two simplicial operads with sub-monoids $\cW$ and $\cW'$ of the respective 1-ary operations.
  Let $\phi\colon \op \to \op'$ be a map of operads with $\phi(\cW) \subset \cW'$. Then $\phi$ induces a map on localizations
  \[
    \phi\colon L^{TH} _\cW \op \longrightarrow L^{TH} _{\cW'} \op'.
  \]
\end{lem}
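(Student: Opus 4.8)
The plan is to define the map $\phi\colon L^{TH}_\cW\op \to L^{TH}_{\cW'}\op'$ levelwise on reduced tree hammocks: given a reduced tree hammock of height $k$ and type $\tau\in\mathbb T(n)$ with labels in $\op$, apply $\phi$ to every label — to the element of $\op(m)$ attached to each atomic piece $O_m$, to the element of $\cW$ attached to each $W_1$, and to the element of $\cW$ attached to each vertical edge — keeping the underlying tree $\tau$ and the height $k$ fixed. First I would check that the result is a well-formed tree hammock in $\op'$: since $\phi$ is a map of operads, it commutes with all the $\gamma$'s, so the commutativity conditions defining a tree hammock (the relations of the form $\gamma(\alpha_1;a_2,a_3)=\gamma(a_1;\alpha_2)$, etc., as in \autoref{fig:tree_hammock}) are preserved; and since $\phi(\cW)\subset\cW'$, the labelling conditions (2) — that $W_1$-pieces and vertical edges carry elements of $\cW'$ — are preserved, while condition (1) is preserved because $\phi$ sends $\op(m)$ to $\op'(m)$, and condition (3) is purely combinatorial in $\tau$ and untouched.

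The one genuine subtlety is condition (4): a reduced tree hammock has no $O_1$- or $W_1$-column whose arrows are all labelled by the identity, but $\phi$ could conceivably send a non-identity element to an identity, so the image need not be reduced. The remedy is exactly the reduction procedure already invoked in the text for grafting: after applying $\phi$, compose any neighbouring same-direction columns that have become composable and delete any column that has become all-identities, arriving at a canonically reduced representative. So I would set $\phi$ on $L^{TH}_\cW\op(n)$ to be "apply $\phi$ to all labels, then reduce." Because the reduction of a tree hammock is canonical (the text asserts this), this is well-defined on simplices.

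Next I would verify compatibility with the structure maps. For the simplicial (and, in the bisimplicial enrichment, the extra simplicial) direction: face maps delete a plane and compose adjacent vertical arrows, degeneracies repeat a plane — both operations are built from $\gamma$ and from deletion/duplication, all of which $\phi$ respects, so $\phi$ commutes with faces and degeneracies up to the same reduction, hence defines a map of (bi)simplicial sets $L^{TH}_\cW\op(n)\to L^{TH}_{\cW'}\op'(n)$ for each $n$. For the operad structure: the composition $\circ_i$ grafts the underlying trees and then reduces; since $\phi$ fixes underlying trees and commutes with label-wise operations and with reduction, the square relating $\circ_i$ before and after $\phi$ commutes. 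Equivariance is immediate because the $\Sigma_n$-action only relabels leaf nodes and does not touch labels. The unit $U\to L^{TH}_{\cW}\op(1)$ factors through $\unit\colon U\to\op(1)$, which $\phi$ respects. Assembling these gives a map of operads enriched in bisimplicial sets.

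The main obstacle — really the only place requiring care — is making the "then reduce" clause rigorous: one must know that the reduction of a tree hammock to reduced form is canonical and functorial with respect to label-wise operad maps, so that the assignment is genuinely well-defined on $L^{TH}_\cW\op(n)$ and commutes on the nose with faces, degeneracies, grafting, and the $\Sigma_n$-action. This is the tree-hammock analogue of the corresponding bookkeeping in \cite[3.4]{DwyerKan} for ordinary hammocks; I would either cite that reduction is canonical (as already claimed in \autoref{sec:loc_operads}) or record a short lemma to that effect, after which everything above is a routine diagram chase.
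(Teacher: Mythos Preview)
Your argument is correct and is exactly the intended one; the paper itself offers no proof of this lemma, treating it as evident from the construction. Your write-up actually supplies more than the paper does: you isolate the one point that needs care---that applying $\phi$ to labels may destroy reducedness (condition (4)), so one must follow with the canonical reduction and then check that ``apply $\phi$, then reduce'' commutes on the nose with faces, degeneracies, grafting, and the symmetric group action. This is precisely the tree-hammock analogue of the bookkeeping in \cite[3.4]{DwyerKan}, and the paper's own repeated use of ``reduce if necessary'' (for grafting and for the map $R$) implicitly relies on the same canonical-reduction lemma you propose to record. Nothing is missing.
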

 
In particular we may consider the map of pairs $(\op, \{1\}) \subset (\op, \cW)$. This defines a  natural map
\[
  \op \longrightarrow L^{TH}_{\cW} \op
\]
where $\op$ is identified with the  bisimplicial set $L^{TH}_{\{1\}}\op$ constant in one simplicial direction.

\begin{lem} [Weak invertibility] \label{op_loc_we}
  Let $w \in \cW $ be a one-ary operation. Then $w$ induces weak homotopy equivalences
  \[
    w \circ \_  \colon L^{TH}_\cW \op (n)  \to L^{TH}_\cW \op (n)   
   \,  \text { and } \,
    \_ \circ _i w\colon
    L^{TH}_\cW \op(n)  \to L^{TH}_\cW \op(n)
  \]
\end{lem}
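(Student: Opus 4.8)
The plan is to reduce \autoref{op_loc_we} to the corresponding statement for the hammock localization of categories, namely \autoref{loc_we}, by exploiting the fact that for a fixed tree type $\tau$ the tree hammocks are essentially ordinary hammocks in the monoid $\op(1)$ decorated at the ``branching'' atomic pieces $O_m$. More precisely, I would fix $n$ and decompose $L^{TH}_\cW\op(n) = \coprod_{\tau \in \mathbb T(n)} H_\tau$ as a bisimplicial set, where $H_\tau$ is the sub-bisimplicial set of reduced tree hammocks of type $\tau$, and then analyze how $w\circ\_$ and $\_\circ_i w$ move the summands around and act within them.

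\textbf{Step 1: Reduce to a single tree.} First I would observe that precomposition $\_\circ_i w$ by a one-ary operation $w$ grafts the atomic piece $W_1$ (labeled by $w$) onto the leaf labeled $i$; after reducing, this is a bijection on the set of underlying trees $\mathbb T(n)$ that is compatible with the simplicial structure, because grafting $W_1$ and then reducing is reversible up to the combinatorics of adjacent same-direction columns. Hence $\_\circ_i w$ is a coproduct over $\tau$ of maps $H_\tau \to H_{\tau'}$, and it suffices to show each such map is a weak equivalence. Similarly $w\circ\_$ grafts $W_1$ at the root; I would handle it the same way. So the problem becomes: for a fixed tree $\tau$, show that attaching $W_1$ (labeled $w$) at a specified leaf, or at the root, induces a weak equivalence $H_\tau \to H_{\tau'}$.

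\textbf{Step 2: Identify $H_\tau$ with a product of ordinary hammock mapping spaces.} The key structural point is that a reduced tree hammock of type $\tau$ is determined by: the labels on the (finitely many) $O_m$ atomic pieces with $m\neq 1$, together with, along each ``edge'' of $\tau$ between consecutive branch/root/leaf nodes, an ordinary reduced hammock in the category $\op(1)$ with respect to $\cW$, subject to the commutativity relations that glue these data at the branch points (these relations are exactly the operad structure maps, e.g.\ $\gamma(w_{1,1};a_2)=\gamma(a_4;w_{2,1})$ in \autoref{fig:tree_hammock}). One subtlety is that the branching data and the edge-hammock data interact: the $O_m$ label at an internal node is glued via $\gamma$ to the hammocks on the incident edges. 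I would model $H_\tau$ as an iterated homotopy limit (or, more concretely, a pullback) over the internal nodes of the spaces $L^{TH}_{\{1\}}\op$ of the branch pieces and the hammock mapping spaces $L^H_{\cW_k}\op(1)_k(1,1)$ of the edges --- then use that $\op(1)_k \to L^H_{\cW_k}\op(1)_k$ has the universal property making these homotopy limits well-behaved. Alternatively, and perhaps more cleanly, I would simply note that grafting $W_1$ at leaf $i$ only changes the hammock living on the edge terminating at leaf $i$, leaving all other edge-hammocks and all branch labels untouched, so $H_\tau \to H_{\tau'}$ is, levelwise in the bisimplicial direction $k$, induced by the map $w^* \colon L^H_{\cW_k}\op(1)_k(1,1)\to L^H_{\cW_k}\op(1)_k(1,1)$ on that single factor (and similarly $w_*$ on the root edge for $w\circ\_$).

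\textbf{Step 3: Invoke the category-level result and pass to the diagonal.} By \autoref{loc_we} applied to the simplicially enriched category $\op(1)_+$ (viewed as a one-object $O$-category), the maps $w_*$ and $w^*$ on $L^H_{\cW}\op(1)(1,1)$ are weak homotopy equivalences. Since a weak equivalence in one factor of a (homotopy) pullback diagram of simplicial sets along a fibrant-enough diagram induces a weak equivalence on the pullback --- here I would either arrange the relevant maps to be fibrations or replace pullbacks by homotopy pullbacks, using that everything in sight is built from hammock spaces which are already known (via \cite{DwyerKan}) to have good homotopical behavior --- the induced map $H_\tau \to H_{\tau'}$ is a weak equivalence in each bisimplicial degree $k$. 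Then \cite[4.1.7]{goerssjardine}, exactly as in the proof of \autoref{loc_we}, promotes a levelwise weak equivalence of bisimplicial sets to a weak equivalence of diagonals, giving the claim.

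\textbf{Main obstacle.} The delicate point is Step 2: making precise the claim that $H_\tau$ decomposes as a (homotopy) pullback of ordinary hammock spaces over the internal nodes, and in particular verifying that grafting $W_1$ genuinely affects only one tensor factor in that decomposition so that the category-level equivalence can be fed in factor-by-factor. The commutativity relations at branch nodes couple the edge-hammocks to the $O_m$-labels, so one must check that these couplings are ``corepresented'' nicely --- e.g.\ that the gluing maps are fibrations, or that the relevant squares are homotopy cartesian --- before a weak equivalence in one factor propagates to the total space. I would expect this to require a small lemma about the spaces $L^H_{\cW_k}\op(1)_k(1,1)$ acting freely/nicely enough under $\gamma$, or else a direct cofinality/deformation-retract argument showing grafting $W_1$ and reducing is, up to homotopy, just the map $w^*$ smashed with the identity on everything else.
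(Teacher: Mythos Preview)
Your approach is far more elaborate than necessary, and it misses the one-line idea that the paper actually uses. In $L^{TH}_\cW\op$ the element $w$ (the tree of type $O_1$ labeled $w$) has an evident companion: the tree of type $W_1$ labeled $w$. Grafting with the latter is a two-sided homotopy inverse to $w\circ\_$ and to $\_\circ_i w$, and the required homotopies are witnessed by single height-$1$ tree hammocks on the affected edge, exactly as in the classical Dwyer--Kan argument. No decomposition by tree type, no pullback description of $H_\tau$, and no appeal to \autoref{loc_we} is needed; the operad $L\op$ was built precisely so that this direct inverse exists.

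Beyond being roundabout, your outline contains concrete errors. First, $\_\circ_i w$ grafts a piece of type $O_1$, not $W_1$: the inclusion $\op\to L\op$ sends $w$ to a right-pointing arrow. Second, and more seriously, your Step~1 claim that grafting-then-reducing is a bijection on $\mathbb T(n)$ is false. Take $n=1$, let $\tau_2$ be the single piece $W_1$, and let $\tau_1$ be $W_1$ followed by $O_1$; grafting $O_1$ at the leaf and reducing sends both $\tau_1$ and $\tau_2$ to $\tau_1$. Thus $\_\circ_i w$ does not respect the coproduct $L\op(n)=\coprod_\tau H_\tau$ in the way your reduction requires, and the strategy of Steps~1--3 does not go through as written. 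The Step~2 description of $H_\tau$ as an iterated homotopy pullback over branch nodes is also only a heuristic; the commutativity constraints at an $O_m$ node couple the adjacent edge-hammocks through the operad map $\gamma$, and you give no argument that these gluing maps are fibrations or that the resulting squares are homotopy cartesian, which is exactly what you would need before a weak equivalence in one factor propagates.
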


\begin{proof}
  Composition with $w$ corresponds to grafting with a tree of type $O_1\colon 1\to 1$ and  label $w$.
  The inverse is given by  grafting with a tree of type $W_1\colon 1 \leftarrow 1$ with label $w$.
  The composition of these two operations is homotopic to the identity as can be deduced from the following hammocks of height 1
  \[
    \xymatrix{\bullet \ar[d]_w&\bullet \ar[l]_1\ar[d]^1\ar[r]^1  &\bullet \ar[d]^w\\
    \bullet  &\bullet\ar[l]^w \ar[r]_w &\bullet}
  \]
  and
  \[
    \xymatrix{\bullet \ar[r]^w\ar[d]_w&\bullet \ar[d]^1 &\bullet \ar[l]_w \ar[d]^w\\
    \bullet \ar[r]_1 &\bullet &\bullet\ar[l]^1 }
  \]
\end{proof}

\vskip .2in

We will now compare the tree hammock localization for operads with the hammock localization for categories.

\vskip .1in
By \autoref{eg:monoid}, the monoid  of one-ary operations in an operad $\op$  forms a sub-operad $M_+$ with 
\[
  M_+(1) = \op(1) , \, M_+(0) = \{ * \}, \text { and }  M_+(n) = \emptyset \text{ for  $n>1$ }.
\]
Vice versa, any monoid $M$ gives rise to such an operad $M_+$ with one-ary operations $M_+(1) =M$.
As every monoid is also a category (with one object), we thus have two potentially different ways of localizing.

\begin{lem}\label{lem:h_to_th}
  Let $M$ be a monoid and $W$ a submonoid.
  Then the  hammock localization of $M$ with respect to $W$ 
  agrees with the tree hammock localization of $M_+$ with respect to $W.$
  More precisely, there is an isomorphism of simplicial monoids
  \[
     L^{H} _W (M)  \cong L^{TH} _W (M_+) (1).
  \] 
  \qed
\end {lem}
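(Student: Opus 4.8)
The plan is to exhibit a bijection, in every simplicial degree, between reduced hammocks (in the category sense) from the object of $M$ to itself and reduced tree hammocks of type $\tau \in \mathbb{T}(1)$ for $M_+$, and then check it is compatible with the monoid structure (concatenation) and with the simplicial structure maps. First I would observe that since $M_+(n) = \emptyset$ for $n \geq 2$, the only atomic pieces available to build a tree $\tau \in \mathbb{T}(1)$ carrying a nonempty label are $O_1 \colon 1 \to 1$ and $W_1 \colon 1 \leftarrow 1$ (an $O_n$ with $n \geq 2$ would require a label in $M_+(n) = \emptyset$, and $O_0$ would force the label to be $* \in M_+(0)$, which one checks can only occur in degenerate/non-reduced configurations that the reduction conditions exclude for a type in $\mathbb{T}(1)$). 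Hence every reduced tree hammock for $M_+$ of type in $\mathbb{T}(1)$ is supported on a linear tree, i.e. a string of left- and right-pointing arrows — which is precisely the shape of a reduced hammock for the one-object category $M$, as already noted in the remark following the definition of reduced tree hammock in Section~\ref{sec:loc_operads}.

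Next I would make the correspondence precise degree by degree. A $k$-simplex of $L^{TH}_W(M_+)(1)$ is a height-$k$ reduced tree hammock of type $\tau$; once we know $\tau$ is linear, the data is exactly $k+1$ parallel copies of a string diagram together with vertical downward arrows in $W$ connecting corresponding nodes, with the labels satisfying the commutativity relations and the four reducedness conditions (pieces $O_1$ labeled in $M_+(1) = M$, pieces $W_1$ and vertical arrows labeled in $W$, adjacent columns alternating direction, no $O_1$- or $W_1$-column entirely labeled by the identity). Comparing with Section~\ref{sec:hammock}: after deleting the initial and final maps (which the tree hammock construction does not record, as the excerpt explicitly states), a $k$-simplex of $L^H_W(M)(\ast,\ast)$ is the same package of data. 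So the identity on labels gives a bijection $\Phi_k$ between $k$-simplices on each side, for every $k$; it is a routine check that $\Phi_\bullet$ commutes with the face maps (delete a plane and compose adjacent verticals — identical description on both sides) and degeneracy maps (repeat a plane), so it is an isomorphism of simplicial sets. For the simplicially enriched version one runs the same argument in each external simplicial degree, obtaining an isomorphism of bisimplicial sets, and then notes $L^{TH}_W(M_+)(1)$ is the object that the operad structure assigns to arity $1$.

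Finally I would check this isomorphism respects the multiplicative structure. On $L^H_W(M)$, composition in the one-object category is concatenation of hammocks with the gluing point expanded to a column of identities; on $L^{TH}_W(M_+)$, the arity-$1$ operation $\circ_1$ grafts the root of one linear tree onto the unique labeled leaf of another, again inserting a column of identity maps at the graft point before reducing. Under $\Phi$ these two operations visibly agree, so $\Phi$ is an isomorphism of simplicial (resp.\ bisimplicial) monoids, which is the assertion of the lemma. The main obstacle — really the only subtle point — is the first step: justifying rigorously that for $\tau \in \mathbb{T}(1)$ no atomic piece $O_n$ with $n \neq 1$ can appear in a reduced tree hammock over $M_+$. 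One must rule out $O_0 \colon 1 \to 0$ (label forced into $M_+(0) = \{*\}$) and $O_n$ for $n \geq 2$ (label forced into $M_+(n) = \emptyset$, immediately impossible); the $O_0$ case needs a short argument that any such piece, lying on the unique directed path to the single labeled leaf, is either absorbed by reduction or prevents the tree from having a leaf labeled $1$, contradicting $\tau \in \mathbb{T}(1)$. Once linearity of the supporting trees is established, everything else is bookkeeping.
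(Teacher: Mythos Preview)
Your proposal is correct and matches the paper's approach: the paper offers no explicit argument (the lemma ends with a bare \qed), relying entirely on the earlier observation in Section~\ref{sec:loc_operads} that a tree hammock built only from $O_1$ and $W_1$ pieces is precisely a reduced hammock for the one-object category $\op(1)$. Your write-up simply spells out the details the paper leaves implicit, in particular the exclusion of $O_n$ for $n\neq 1$ and the compatibility with the simplicial and monoid structures.
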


Thus, for example, the torus $T$ from \autoref{eg:moduli} generates a (free) monoid $W \simeq \mathbb N\subset \cM (1)$. 
Its localization is homotopy equivalent to $\mathbb Z$.

\vskip .1in
More generally, let $\op$ be an operad and $\cW$ be a submonoid of $\op (1)$ which we extend to the suboperad $\cW_+$.   
By \autoref{smcoperad}, $\op$ and $\cW_+$  uniquely define a strict symmetric monoidal category $\cC_\op$ with a strict monoidal subcategory $\cC_{\cW_+}$.

\begin{prop}
  Hammock reduction defines a 
  full functor of (bi)simplicially enriched $\mathbb N$-categories
  \[
    R\colon L^H_{\cC_{\cW_+}} \cC_\op \longrightarrow \cC _{L^{TH}_{\cW}\op }.
  \]
\end{prop}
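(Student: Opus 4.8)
The plan is to construct $R$ directly on hammocks and then check it is well-defined, functorial, and full. First I would fix objects $a, b \in \mathbb{N}$ and describe how to send a reduced hammock in $\cC_\op(a,b)$ of some length $m$ to a reduced tree hammock for $\cC_{L^{TH}_\cW\op}$, i.e.\ to a morphism in $\cC_{L^{TH}_\cW\op}(a,b)$. By \autoref{smcoperad}\ref{item:maps}, a morphism in $\cC_{L^{TH}_\cW\op}(a,b)$ is the same data as a partition $b = k_1 + \cdots + k_a$, a tuple of elements of $(L^{TH}_\cW\op)(k_i) = L^{TH}_\cW\op$-tree-hammocks of type in $\mathbb{T}(k_i)$, and a permutation in $\Sigma_b$ (modulo the block action). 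So the construction has two layers: (i) a hammock in $\cC_\op(a,b)$, unwound via the monoidal/composition decomposition of each column, is a "string-shaped" tower of trees with $a$ roots and $b$ leaves; I would show that reading off, for each of the $a$ roots, the sub-tree-hammock lying above it produces exactly a tuple of reduced tree hammocks of the required types, together with the leaf-labelling permutation recording which of the $b$ global leaves sits over which root. The "hammock reduction" of the title is precisely the passage from the column-by-column presentation to this per-root tree presentation, contracting columns of composable same-direction arrows and deleting identity columns exactly as in the definition of reduced tree hammock.

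The key steps, in order, are: (1) make precise the dictionary between a column of a hammock in $\cC_\op$ (a morphism $\cC_\op(b_{j-1}, b_j)$, hence by \autoref{smcoperad}\ref{item:maps} a tuple of one-ary-or-wider operations plus a permutation) and a single "plane of atomic pieces" in a tree hammock; here left-pointing and vertical arrows of the hammock live in $\cC_{\cW_+}$, which by construction of $\cC_{\cW_+}$ forces the corresponding atomic pieces to be of type $W_1$ with labels in $\cW$, matching condition (2) in the definition of reduced tree hammock; (2) check that the "arrows in adjacent columns point in different directions" and "no column of identities" conditions for hammocks translate, after the per-root regrouping, into the reducedness conditions (3) and (4) for tree hammocks — this is where an honest reduction step (contract/delete) may be needed, and I would invoke the remark in the text that any tree hammock can be put in reduced form by composing neighboring same-direction columns and deleting identity columns; (3) verify simplicial (in fact bisimplicial) naturality: face maps delete a plane and compose the adjacent vertical labels on both sides, degeneracies repeat a plane, and these commute with $R$ essentially by inspection; (4) verify functoriality, i.e.\ $R$ of a concatenation of hammocks (with the gluing column expanded to identities) equals the $\cC_{L^{TH}_\cW\op}$-composition of the images — this reduces to associativity of $\gamma$ in $\op$ and compatibility of the block permutations, which is exactly the content packaged in the composition definition of $\cC_\op$ and in \autoref{smcoperad}; (5) prove fullness: given a morphism in $\cC_{L^{TH}_\cW\op}(a,b)$, i.e.\ a tuple of reduced tree hammocks plus a permutation, exhibit a reduced hammock in $\cC_\op(a,b)$ mapping to it — take the disjoint union of the $a$ tree hammocks side by side, interleave their planes into a common height by padding with identity planes (which are degeneracies, hence do not change the class), and read the result column-by-column as a genuine hammock in $\cC_\op$; surjectivity onto morphisms is then immediate since $R$ is the identity on the underlying data after this reassembly.

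The main obstacle I anticipate is step (2) together with the bookkeeping in step (5): different leaves of a single tree sit at different "depths" (an $O_n$ piece can be followed on different branches by different numbers of $W_1$ pieces), so the per-root tree hammocks coming out of a single hammock in $\cC_\op$ need not all have the same height, and conversely padding a tuple of tree hammocks to a common height to realize fullness requires inserting identity planes branch-by-branch in a way that still yields a legal (reduced, after contraction) hammock in $\cC_\op$. Making this interleaving canonical — or at least showing the resulting class in $L^H_{\cC_{\cW_+}}\cC_\op$ is well-defined and independent of the padding choices — is the delicate point; everything else (respecting $\cW$, commutativity of the diagrams, bisimplicial structure) follows formally from the definitions and from \autoref{smcoperad}. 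Note we do not claim $R$ is faithful or an equivalence, only full, which is why we may be cavalier about the choices made in the fullness argument: distinct hammocks are allowed to have the same image.
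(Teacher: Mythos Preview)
Your approach is essentially the paper's: hammocks in $\cC_\op$ are exactly tree hammocks whose root-to-labeled-leaf geodesics all have the same length, the map $R$ is tree-hammock reduction, and fullness is obtained by padding short branches with identities. Two remarks are worth making.

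First, the paper short-circuits most of your bookkeeping by reducing at the outset to source object $1$. Since both $\cC_\op(a,b)$ and $\cC_{L^{TH}_\cW\op}(a,b)$ are, by \autoref{smcoperad}, built from their source-$1$ pieces via the monoidal product and the $\Sigma_b$-action, it suffices to construct and analyze $R\colon L^H_{\cC_{\cW_+}}\cC_\op(1,n)\to L^{TH}_\cW\op(n)$. This collapses your steps (1), (4), and the tuple-handling in (5) to a single tree with one root, and makes the geodesic-length description of hammocks in $\cC_\op(1,n)$ transparent.

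Second, there is a terminological slip in your step (5) that, if taken literally, would be an actual error. What needs equalizing for fullness is the \emph{length} of the tree hammock (the number of atomic pieces along each root-to-leaf geodesic), not the simplicial \emph{height}; at a fixed bisimplicial degree all tree hammocks in your tuple already have the same height. Correspondingly, the padding is by identity \emph{columns} --- atomic pieces of type $O_1$ or $W_1$ labeled by the unit, inserted along short branches --- and not by degeneracies (identity \emph{planes}) in the simplicial direction. Degeneracies do nothing to fix unequal geodesic lengths. Once you pad with identity columns so that every labeled leaf is the same distance from the root, the result is literally a hammock in $\cC_\op(1,n)$, and its tree-hammock reduction is the element you started with; this is the paper's one-line argument for surjectivity of $R$.
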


Note that the target of the reduction map $R$ is a strict symmetric monoidal category.
Thus tree hammock localization provides a localization in strict symmetric monoidal categories for categories of the form
$\cC _\op$.

\begin{proof}
By definition, $R$ is the identity on objects.

By the description of the morphisms  in $\cC_\op$ in \autoref{sec:smc},  the statement of the proposition will follow from the special case where the source is the object $1$. 

Let $n\geq 0$. 
It is not hard to see that the  reduced hammocks defining   $L^H_{\cC_{\cW_+}} \cC_\op (1,n)$ are precisely the tree hammocks  
where the geodesic paths from the root node to any of the $n$ labeled leaf nodes have exactly the same length, 
that is the same number of atomic pieces: The union of all the atomic pieces that are precisely $i$ steps away 
from the root make up the $i$th column of the standard hammock described in  \autoref{sec:hammock}. 
Note that there might be more or less than $n$ pieces $i$ steps away from the root; 
the former case may arise when there are  contributions from paths to unlabeled leaf nodes. 
Even though we start with a reduced hammock, it may not be reduced  as a tree hammock. 
This is because  hammock reduction will only remove the $i$th column in the hammock
if the individual columns of all $i$th pieces in the tree hammock are identities. On the other hand the tree hammock 
reduction removes identity columns defined by any atomic piece in the tree.
Thus every element in $L^H_{\cC_{\cW_+}} \cC _\op (1,n)$ defines an element in $L^{TH}_\cW\op (n)$  but possibly only after tree hammock reduction:
\[
R\colon L^H_{\cC_{\cW_+}}\cC _\op (1, n) \longrightarrow L^{TH}_\cW \op (n).
\]

It is easy to see that every reduced tree hammock
is the reduction of a tree hammock with equal length geodesics from root nodes to labeled leaf nodes. 
Indeed, any tree hammock can be extended to such a tree hammock by adding identity columns to the labeled nodes where necessary. Hence $R$ is surjective.

As reduction preserves the simplicial structure, $R$ defines a (bi)simplicial map. Finally, reduction also commutes with composition
(gluing of hammocks and grafting of trees). Hence $R$ is functorial. 
\end{proof}

In the special case of \autoref{lem:h_to_th}, $R$ induces  an isomorphism  on morphism spaces. In general, we believe that $R$ induces a homotopy equivalence
on morphism spaces. 
This would imply that properties of the usual hammock localization (such as homotopy invariance and calculus of fractions) can be 
transferred to the tree hammock localization of operads.
We hope to return to this question elsewhere.

\section{Localization for algebras}\label{sec:loc_alg}

We now turn to the study of algebras over $L\op = L^{TH} _\cW (\op)$.  Correspondingly we restrict our attention to operads in $\sset$. 

\vskip .1in
Recall,   a {\bf $\op$-\oalg{}} is a  based simplicial set $(X, \ast)$ with  structure maps
  \[
   \theta\colon  \op(j) \times  X^j \to  X
  \]
  for all $j \geq  0$ such that 
  \begin{enumerate}
   \item For all $c\in \op(k)$,  $d_s\in \op({j_s})$, and $x_t \in X$
    \[
     \theta(\gamma(c;d_1,\ldots,d_k);x_1,\ldots,x_j) = 
     \theta(c;y_1,\ldots,y_k)
    \]
    where $y_s = \theta(d_s;x_{j_1+\ldots+j_{s-1}+1},
    \ldots,x_{j_1+\ldots+j_s} )$. \label{item_alg_assoc}
   \item  For all $x \in X$
    \[
      \theta(1;x)=x\text{ and }\theta(\ast)=\ast.
    \]
   \item  For all $c\in \op(k)$, $x_s \in X$, $\sigma\in \Sigma_k$
    \[
      \theta (c\sigma;x_1,\ldots,x_k) = 
      \theta(c;x_{\sigma^{-1}1},\ldots ,x_{\sigma^{-1}k}).
    \]
  \end{enumerate}

\begin{prop}\label{w_action}
For any $L \op$-algebra $Y$, the action of any $w \in \cW$ on $Y$ is a homotopy equivalence.
\end{prop}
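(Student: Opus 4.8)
The plan is to exhibit an explicit homotopy inverse to the action of $w$, built from the ``formal inverse'' that already shows up in the proof of \autoref{op_loc_we}. Write $w$ also for its image in $L\op(1)_0$, namely the height-$0$ tree hammock of type $O_1\colon 1\to 1$ labeled by $w$, and let $\bar w\in L\op(1)_0$ be the tree hammock of type $W_1\colon 1\leftarrow 1$ labeled by $w$ --- permitted precisely because $w\in\cW$. For a $0$-simplex $u\in L\op(1)_0$ its action on $Y$ is the simplicial self-map $u_\bullet\colon Y\to Y$ which in degree $n$ sends $y\in Y_n$ to $\theta(\hat u,y)$, where $\hat u\in L\op(1)_n$ is the iterated degeneracy of $u$; the claim is that $w_\bullet$ is a homotopy equivalence with inverse $\bar w_\bullet$. (Since for algebras we work over $\sset$ while $L\op$ is a priori bisimplicially enriched, one should read $L\op(1)$ here as the diagonal; it is then a simplicial monoid under $\circ_1$ acting on $Y$.)

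The first step is to record, straight from the associativity axiom~(\ref{item_alg_assoc}) for $L\op$-algebras specialized to $k=1$, the identities $\bar w_\bullet\circ w_\bullet=(\bar w\circ_1 w)_\bullet$ and $w_\bullet\circ\bar w_\bullet=(w\circ_1\bar w)_\bullet$, together with $1_\bullet=\id_Y$ from the unit axiom. Hence it suffices to prove that each of $\bar w\circ_1 w$ and $w\circ_1\bar w$ acts on $Y$ by a map homotopic to $\id_Y$.

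The technical core is a small lemma: a $1$-simplex $h\in L\op(1)_1$ with faces $d_1 h=a$ and $d_0 h=b$ induces a simplicial homotopy $a_\bullet\simeq b_\bullet$. Regarding $h$ as a map $\Delta^1\to L\op(1)$, one defines $H\colon Y\times\Delta^1\to Y$ in degree $n$ by $H(y,\phi)=\theta(\phi^{*}h,y)$ for $\phi\in\operatorname{Hom}([n],[1])$; naturality of the action makes $H$ simplicial, and restricting $\phi$ to the two vertices of $\Delta^1$ identifies the two ends of $H$ with $a_\bullet$ and $b_\bullet$. Now I would invoke \autoref{op_loc_we}: the height-$1$ reduced tree hammocks displayed in its proof are precisely $1$-simplices of $L\op(1)$ whose pair of faces, after reduction, is $\{1,\,\bar w\circ_1 w\}$ in one case and $\{1,\,w\circ_1\bar w\}$ in the other. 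Applying the lemma to these gives $\bar w_\bullet\circ w_\bullet\simeq\id_Y$ and $w_\bullet\circ\bar w_\bullet\simeq\id_Y$, and the proposition follows. (Should tracing those reductions be unpleasant, a softer route works: left composition with $w$ on $L\op(1)$ is a weak equivalence by \autoref{op_loc_we}, hence bijective on $\pi_0$, which forces $[w]$ to be a unit in the monoid $\pi_0 L\op(1)$; then $\bar w\circ_1 w$ and $1$ lie in a common path component, and the lemma is applied along a zigzag of $1$-simplices.)

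I expect the only real friction to be bookkeeping rather than anything conceptual: keeping the $d_0/d_1$ and vertex conventions straight in the homotopy lemma, confirming that $\theta$ genuinely makes $L\op(1)$ (or its diagonal) into a simplicial monoid acting on $Y$, and being explicit that ``homotopy equivalence'' here means that $w_\bullet$ induces an isomorphism on all homotopy groups, equivalently that $|w_\bullet|$ is a homotopy equivalence --- a conclusion that a zigzag of simplicial homotopies delivers even though $Y$ is not assumed fibrant.
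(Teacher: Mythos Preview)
Your proposal is correct and follows essentially the same approach as the paper: the paper's proof simply notes that $w$ has a homotopy inverse $w^{-1}$ in $L\op$ (from \autoref{op_loc_we}) and then invokes the associativity axiom~(\ref{item_alg_assoc}) to conclude $\theta(w^{-1},-)\circ\theta(w,-)\simeq\theta(1,-)$. You have unpacked exactly this argument, making explicit the formal inverse $\bar w$, the lemma that a $1$-simplex in $L\op(1)$ induces a simplicial homotopy of actions, and the bisimplicial-to-diagonal passage---details the paper leaves implicit in its two-line proof.
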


\begin{proof}
As $w$ has a homotopy inverse $w^{-1}$ in $L\op$, the associativity in condition \ref{item_alg_assoc} above implies that we have a homotopy equivalence
\[\theta (w^{-1}, -) \circ \theta (w, -) \simeq \theta (1, -) \colon Y \to Y.\qedhere\]
\end{proof}

\begin{prop}\label{localization_alg}
  There is a functor 
  \[ 
    L (= L^{TH}_\cW) \colon \op\text{-algebras} \to L\op\text{-algebras} 
  \]
  with the following three properties.
  \begin{enumerate}
  \item For each $\op$-algebra $X$, there is a zig-zag of natural $\op$-algebra maps  between  $X$ and 
  $L X$.  
  \item If $X$ is  an $L\op$-algebra, there is a map $X \to LX$ which has a left inverse up to homotopy.
   \item 
  For any $\op$-algebra $X$, $L\op$-algebra $Y$, and map of $\op$-algebras $X\to Y$, there is a canonical map 
  $LX\to Y$ so that 
  \[
   \xymatrix{X\ar[r]\ar[d]&Y\\LX\ar[ur]}
  \] 
  commutes up to homotopy. 
  \end{enumerate}
\end{prop}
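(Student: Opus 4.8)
The plan is to build $LX$ as a derived tensor product over $\op$, following the standard bar-construction approach to change-of-operad functors but interpreting everything so that it lands in $L\op$-algebras. Concretely, since $\op \to L\op$ is a map of operads, any $L\op$-algebra restricts to an $\op$-algebra, and one seeks a left adjoint (up to homotopy) to this restriction. The naive candidate $L\op \circ_\op X$ (the reflexive coequalizer defining the honest pushforward) is not homotopically well-behaved, so I would instead set $LX$ to be the realization of the two-sided simplicial bar construction $B_\bullet(L\op, \op, X)$, whose space of $q$-simplices is $L\op \circ \op^{\circ q} \circ X$, with face maps given by the operad action of $\op$ on $L\op$ on one side and on $X$ on the other, and degeneracies by the unit. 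This is visibly functorial in $X$, and it is an $L\op$-algebra because $L\op$ acts on the leftmost factor levelwise and realization commutes with that action.

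The three properties would then be checked in order. For (1): the bar construction $B_\bullet(\op,\op,X)$ has realization canonically weakly equivalent to $X$ (extra degeneracy / simplicial contraction), and there is a natural map of simplicial $\op$-algebras $B_\bullet(\op,\op,X)\to B_\bullet(L\op,\op,X)$ induced by $\op\to L\op$; realizing gives the zig-zag $X \xleftarrow{\ \sim\ } |B_\bullet(\op,\op,X)| \to |B_\bullet(L\op,\op,X)| = LX$ of natural $\op$-algebra maps. For (2): if $X$ is already an $L\op$-algebra, the action map assembles into an augmentation $B_\bullet(L\op,\op,X)\to X$ of $\op$-algebras which, after restricting along $\op\to L\op$, admits a simplicial homotopy section coming from the unit $\eta\colon X = L\op\circ\op^{\circ 0}\circ\ast \hookrightarrow$ (insert $X$), using that the composite $X \to LX \to X$ is the identity on the nose while $LX \to X \to LX$ is homotopic to the identity via the standard simplicial homotopy for augmented bar constructions with a section; this uses \autoref{op_loc_we} only indirectly, through the fact that $w^{-1}$ exists in $L\op$. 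For (3): given $f\colon X\to Y$ with $Y$ an $L\op$-algebra, one forms $B_\bullet(L\op,\op,X)\to B_\bullet(L\op,\op,Y)\to B_\bullet(L\op,L\op,Y)\xrightarrow{\sim} Y$, where the last map is the bar-resolution equivalence for $Y$ as an $L\op$-algebra; realizing gives $LX\to Y$, and the triangle with $X\to LX$ commutes up to the homotopy produced by comparing $B_\bullet(\op,\op,X)$ with $B_\bullet(L\op,\op,X)$ and with $B_\bullet(L\op,L\op,Y)$ — all three receive compatible maps from $B_\bullet(\op,\op,X)$, so the square commutes up to the concatenation of the contraction homotopies.

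The main obstacle I expect is not the formal structure but the point-set homotopy theory needed to make the realizations behave: one must know that geometric realization of simplicial based simplicial sets sends levelwise weak equivalences to weak equivalences and commutes up to natural equivalence with the relevant operad actions, and that the bar constructions are Reedy-cofibrant (or use proper simplicial objects) so that $|B_\bullet(\op,\op,X)|\to X$ and $|B_\bullet(L\op,L\op,Y)|\to Y$ are genuine equivalences rather than merely augmentations. Since the excerpt works with operads enriched in bisimplicial sets (the localization $L\op$ carries an extra simplicial direction), a secondary subtlety is bookkeeping: $LX$ is built from a bisimplicial operad acting on a simplicial set, so one should fix a diagonal/realization convention at the outset and check the three properties are insensitive to it. I would isolate these homotopical inputs as a lemma about bar constructions of algebras over (bisimplicial) operads and then treat properties (1)–(3) as essentially formal consequences, so that the proof proper reduces to writing down the maps and invoking the lemma together with \autoref{w_action} and \autoref{op_loc_functor}.
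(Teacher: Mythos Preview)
Your proposal is correct and takes essentially the same approach as the paper: define $LX$ as the two-sided bar construction $B(\mathbb{L\op},\mathbb{\op},X)$ (the paper works with the associated monads rather than operadic $\circ$-composition, but this is the same object), obtain the zig-zag of (1) from $X\xleftarrow{\sim}B(\mathbb{\op},\mathbb{\op},X)\to B(\mathbb{L\op},\mathbb{\op},X)$, and deduce (2) and (3) by factoring through $B(\mathbb{L\op},\mathbb{L\op},-)$ and using the bar-resolution equivalence. The paper in fact states and proves the result for an arbitrary operad map $\op\to\opt$ and then specializes to $\op\to L\op$, so the argument is purely formal and does not invoke Lemma~\ref{op_loc_we} or Proposition~\ref{w_action}; the homotopical and bisimplicial bookkeeping you flag is real but is simply asserted in the paper rather than isolated as a lemma.
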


This result follows from a very general construction for monads associated to operads.  We now recall the relevant definitions needed for the construction.

\begin{defn}
  If $\op$ is an operad and $(X, \ast)$ is a based simplicial set,  the 
  {\bf free $\op$-\oalg{} on $X$} is 
  \[
    \fa{\mop}{X} \coloneqq  \coprod_{n\geq 0} \left(\op(n) 
    \times_{\Sigma_n} X^n\right)\big/ \sim 
  \]
  where $\sim$ is a base point relation generated by
  \[
    (\sigma_ic;x_1,\ldots ,x_{n-1}) \sim  (c;s_i(x_1,\ldots ,x_{n-1}))
  \]
  for all $c \in \op(n)$, $x_i \in X$, and $0 \leq i < n$ where 
  $\sigma_ic = \gamma(c,e_i)$ with 
  \[
    e_i =(1^i,\ast,1^{n-i-1})\in \op(1)^i \times \op(0) \times \op(1)^{n-i-1},
  \] 
  and $s_i(x_1,\ldots ,x_{n-1})=(x_1,\ldots ,x_i,\ast,x_{i+1},\ldots ,x_{n-1})$.
\end{defn}

This construction defines a monad in $\sset_*$ and associates a natural transformation of monads $\mop \to \mopt$ to a  map of operads  $\op \to  \opt.$ 
Hence, it defines a functor from operads in $\sset$ to monads in $\sset_*$. 

For a monad $\mop$ in the category $\sset_*$, an {\bf $\mop$-\malg{}}  is a pair $(X, \xi)$ consisting of a simplicial set $X$ 
and a simplicial  map $\xi\colon \fa\mop X \to  X$  that is unital and associative. 
A {\bf map of $\mop$-\malgs{}} is a map of simplicial sets $f\colon X \to Y$ where the following diagram commutes.
\[\xymatrix{
 \fa\mop X \ar[r]^{\fa\mop{f} }\ar[d]_{\xi_X}&\fa\mop Y\ar[d]^{\xi_Y}
  \\
 X\ar[r]^{f} & Y
}\]
The functor from operads to monads described above defines an isomorphism between the category of $\op$-\oalgs{} 
and the category of $\mop$-\malgs{}.

A {\bf $\mop$-functor} \cite[2.2, 9.4]{May} in a category $\cD$ is a functor $F \colon  \sset_* \to  \cD$ and a unital and associative natural transformation 
$\lambda \colon  F \mop \to F$.  
The {\bf bar construction} \cite[9.6]{May} for a monad $\mop$, $\mop$-\malg{} $X$ and $\mop$-functor $F$, 
denoted $B_\bullet(F, \mop, X)$, is the simplicial object in $\cD$ where  
\[B_q(F, \mop, X)\coloneqq F\mop^q X.\]
If $\cD$ is the category of simplicial sets, $B_\bullet(F, \mop, X)$ is a bisimplicial set.
We define 
\[
  B(F, \mop, X)\coloneqq |B_\bullet(F, \mop, X)|
\]
where the geometric realization is the diagonal  simplicial set.

Then \autoref{localization_alg} is a special case of the following  familiar and  more general result.  
We include a proof for completeness. 

\begin{prop}
  A map of operads $\op \to  \opt$ defines a functor 
  \[ 
    \otop{\opt}{(-)} \colon \op\text{-algebras} \to \opt\text{-algebras} 
  \]
  satisfying the following properties.
  \begin{enumerate}
  \item For each $\op$-algebra $X$, there is a zig-zag of natural $\op$-algebra maps  between  $X$ and  
  $\otop{\opt}{X} $.  
  \item If $X$ is  an $\opt$-algebra there is a map $X \to \otop{\opt}{X} $ which has a left inverse up to homotopy.
 \item For any $\op$-algebra $X$, $\opt$-algebra $Y$ and map of $\op$-algebras $X\to Y$, there is a canonical map $\otop{\opt}{X} \to Y$ so that 
  \[
    \xymatrix{X\ar[r]\ar[d]&Y\\
    \otop{\opt}{X} \ar[ur]}
  \]
  commutes up to homotopy. 
 \end{enumerate}
\end{prop}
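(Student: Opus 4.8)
The plan is to define the functor via the two-sided bar construction $\otop{\opt}{X} \coloneqq B(\mopt, \mop, X)$, where $\mop$ and $\mopt$ are the monads associated to $\op$ and $\opt$, and the $\mopt$-functor structure on $\mopt$ comes from the natural transformation of monads $\mop \to \mopt$ induced by the map of operads. First I would recall that $B(\mopt, \mop, X)$ carries a natural $\mopt$-algebra structure: the $\mopt$-functor structure map $\lambda\colon \mopt\mop \to \mopt$ (given by $\mopt$ applied to multiplication followed by multiplication in $\mopt$, or equivalently using the monad map) makes $B_\bullet(\mopt, \mop, X)$ a simplicial $\mopt$-algebra, and since realization here is the diagonal of a bisimplicial set and $\mopt$ commutes with this realization (it is built from coproducts and products with simplicial sets, which commute with geometric realization in $\sset_*$), the realization inherits a $\mopt$-algebra structure. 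Functoriality in $X$ is immediate from functoriality of each $B_q$.

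Next I would establish the three properties. For \emph{(1)}, the standard simplicial contraction argument applies: there is a map of $\mop$-algebras $\varepsilon\colon B(\mop, \mop, X) \to X$ induced by the augmentation (the last face map iterated, i.e. the monad action $\xi_X$), and $\varepsilon$ is a homotopy equivalence with explicit extra degeneracy $s_{-1} = \eta$; moreover there is a natural map $B(\mop, \mop, X) \to B(\mopt, \mop, X) = \otop{\opt}{X}$ of $\mop$-algebras induced by $\mop \to \mopt$. This gives the zig-zag $X \xleftarrow{\ \simeq\ } B(\mop, \mop, X) \to \otop{\opt}{X}$ of natural $\op$-algebra maps. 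For \emph{(2)}, if $X$ is already an $\opt$-algebra (hence an $\mopt$-algebra), the unit $\eta\colon X \to \mop X$ composed appropriately gives a map $X \to B(\mopt,\mop,X)$ via the inclusion of $B_0$-degenerate simplices, and the $\mopt$-action $B(\mopt,\mop,X) \to X$ (again the augmentation, now using the $\mopt$-algebra structure on $X$ pulled back along $\mop \to \mopt$) is a left inverse up to homotopy, by the same extra-degeneracy argument. For \emph{(3)}, given a map of $\op$-algebras $f\colon X \to Y$ with $Y$ an $\opt$-algebra, I would define $\otop{\opt}{X} = B(\mopt, \mop, X) \to B(\mopt, \mopt, Y) \xrightarrow{\ \simeq\ } Y$, where the first map uses $\mop \to \mopt$ on the middle variable and $f$ (suitably adjointed through the monad map) on the outer variables, and the second is the augmentation, which is an equivalence of $\mopt$-algebras since $Y$ is an $\mopt$-algebra. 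Commutativity up to homotopy of the triangle follows by comparing both composites $X \to Y$ with the map induced by $f$ on bar constructions and using that the augmentations are compatible.

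The main obstacle I expect is bookkeeping around geometric realization: one must check carefully that the diagonal realization of the bisimplicial set $B_\bullet(\mopt, \mop, X)$ genuinely carries a $\mopt$-algebra structure, i.e. that $\mopt$ (which involves a coproduct over $n$ of $\opt(n) \times_{\Sigma_n} (-)^n$) commutes with the diagonal realization up to coherent isomorphism, and that the unitality and associativity of the resulting action hold on the nose rather than merely up to homotopy. Since $\opt(n)$ is a simplicial set and the construction only uses products with simplicial sets, coproducts, and quotients by finite group actions — all of which commute with geometric realization of simplicial sets — this is routine but needs to be stated. The homotopy-theoretic content (the extra degeneracy giving the contractions) is entirely standard, following \cite[9.8]{May}, so the only real work is assembling these pieces and verifying the triangle in \emph{(3)} commutes up to homotopy by exhibiting the relevant simplicial homotopy. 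I would keep the proof brief, citing \cite{May} for the bar construction formalism and the extra-degeneracy lemma, and spelling out only the maps in the zig-zags and the triangle.
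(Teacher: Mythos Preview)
Your proposal is correct and follows essentially the same approach as the paper: both define $\otop{\opt}{X} = B(\mopt,\mop,X)$, use the zig-zag $X \xleftarrow{\sim} B(\mop,\mop,X) \to B(\mopt,\mop,X)$ for (1), and obtain the retraction in (2) via $B(\mopt,\mop,X)\to B(\mopt,\mopt,X)\to X$. The only minor differences are presentational: for (3) the paper routes through naturality of the functor, taking $\otop{\opt}{X}\to\otop{\opt}{Y}\to Y$, rather than your direct map through $B(\mopt,\mopt,Y)$, and the paper does not spell out the commutation of $\mopt$ with diagonal realization that you (rightly) flag as the main technical bookkeeping.
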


\begin{proof}
  The map of operads, $\op \to  \opt,$  the associated natural transformation of monads, 
  $\mop \to  \mopt,$ and  the composite
  \[
    \mopt \mop \to \mopt \mopt \to \mopt,
  \]
  give $\mopt$ the structure of an $\mop$ functor. Thus, for an $\op$-algebra $X$,  we may define 
  \[
    \otop{\opt}{X} \coloneqq B(\mopt, \mop, X)
  \] 
  and observe that $\otop{\opt}{X}$ inherits the structure of a $\opt$-algebra from the copy of $\mopt$ in the bar construction.  Then there is a zig-zag of $\op$-algebra  maps 
  \[
    X\xleftarrow{\sim} B(\mop, \mop, X)\to B(\mopt, \mop, X)= \otop{\opt}{X}
  \]
  where the left hand map is the natural homotopy equivalence between $X$ and its free resolution, and the right hand map is induced by the map $\op\to \opt$.  
  This construction is natural in $X$, and hence it defines  a functor 
  \[\otop{\opt}{-}: \op-algebras \to \opt-algebras.\]
  
  If $X$ is a $\opt$-algebra, consider the composite 
  \[
    \otop{\opt}{X} \coloneqq  B(\mopt  , \mop, X)
    \rightarrow B(\mopt, \mopt , X) \rightarrow X.
  \]  
  Then 
  \[ 
    B(\mop, \mop, X) \rightarrow  B(\mopt , \mop, X) \rightarrow B(\mopt, \mopt , X) \rightarrow X
  \]
  is the natural homotopy equivalence $B(\mop, \mop, X)\xrightarrow{\sim} X$. 

  For a map $X \to Y$ of $\op$-algebras, naturality gives a commutative diagram
  \[
    \xymatrix{X\ar[r]\ar[d]&Y\ar[d]\\
    \otop{\opt}{X}\ar[r]&\otop{\opt}{Y}}
  \]  
  As $Y$ is a $\opt$-algebra, the right vertical map has a left inverse.  
  Then the composite \[ \otop{\opt}{X}\to \otop{\opt}{Y}\to Y\] makes the following diagram commute up to homotopy
  \[\xymatrix{X\ar[r]\ar[d]&Y\\
\otop{\opt}{X}\ar[ur]}\]
\end{proof}

\bibliography{inverting}{}
\bibliographystyle{plain}
\end{document}